\newcommand{\remind}[1]{{\bf ** #1 **}}
\def\ge{\geqslant}
\def\le{\leqslant}
\def\a{\alpha}
\def\b{\beta}
\def\d{\delta}
\def\e{\epsilon}
\def\s{\sigma}
\def\k{\kappa}
\def\i{^{-1}}
\def\<{\langle}
\def\>{\rangle}
\newcommand{\BC}{\ensuremath{\mathbb {C}}\xspace}
\newcommand{\C}{\BC} % use this a lot
\newcommand{\BF}{\ensuremath{\mathbb {F}}\xspace}
\newcommand{{\BG}}{\ensuremath{\mathbb {G}}\xspace}
\newcommand{{\BK}}{\ensuremath{\mathbb {K}}\xspace}
\newcommand{\BN}{\ensuremath{\mathbb {N}}\xspace}
\newcommand{\BZ}{\ensuremath{\mathbb {Z}}\xspace}
\newcommand{\CC}{\ensuremath{\mathcal {C}}\xspace}
\newcommand{\CP}{\ensuremath{\mathcal {P}}\xspace}
\newcommand{\CU}{\ensuremath{\mathcal {U}}\xspace}
\newcommand{\Ad}{{\mathrm{Ad}}}
\newcommand{\Ztwo}{\BZ/2\BZ}
\newcommand{\Zg}{\BZ_{\ge 0}}
\newcommand{\GL}{\mathrm{GL}}
\newcommand{\GLdagger}{\mathrm{GL}^\dagger}
\newcommand{\inv}{^{-1}}
\newcommand{\SO}{{\mathrm{SO}}}
\renewcommand{\O}{{\mathrm{O}}}
\newcommand{\Sp}{{\mathrm{Sp}}}
\newcommand{\wt}{\widetilde}
\def\tW{\tilde W}
\def\Gu{[G_u]}  %unipotent classes in G
\def\Gue{[G_u]_e}  %unipotent classes in G, in the image W_e
\def\Gou{[G_{0,u}]}  %unipotent classes in G_0  (can't use 0 nicely in command name, use o instead)
\def\Goue{[G_{0,u}]_e}  %unipotent classes in G_0, in the image of W_e
\def\Du{[D_u]}  %unipotent classes in G
\def\Wc{[W]}    %conjugacy classes in W
\def\Wec{[W_e]} %elliptic conjugacy classes in W
\def\WDec{[W^D_e]} %elliptic conjugacy classes in W
\def\tPminus{\wt{\CP_{-1}}}
\def\tPplus{\wt{\CP_{1}}}
\def\epsilonmax{\epsilon_{\text{max}}}
\def\leW{\preceq_W}
\def\leu{\preceq_u}
\def\Fbartwo{\overline{\BF_2}}
\newtheorem{theorem}{Theorem}
\newtheorem{proposition}[theorem]{Proposition}
\newtheorem{lemma}[theorem]{Lemma}
\newtheorem{corollary}[theorem]{Corollary}
\theoremstyle{definition}
\newtheorem{example}[theorem]{Example}
\newtheorem{remark}[theorem]{Remark}
\numberwithin{equation}{section}
\numberwithin{theorem}{section}
\renewcommand{\to}{%
   \ifbool{@display}{\longrightarrow}{\rightarrow}%
   }
\let\shortmapsto\mapsto
\renewcommand{\mapsto}{%
   \ifbool{@display}{\longmapsto}{\shortmapsto}%
   }
\newlength{\olen}
\newlength{\ulen}
\newlength{\xlen}
\newcommand{\xra}[2][]{%
   \ifbool{@display}%
      {\settowidth{\olen}{$\overset{#2}{\longrightarrow}$}%
       \settowidth{\ulen}{$\underset{#1}{\longrightarrow}$}%
       \settowidth{\xlen}{$\xrightarrow[#1]{#2}$}%
       \ifdimgreater{\olen}{\xlen}%
          {\underset{#1}{\overset{#2}{\longrightarrow}}}%
          {\ifdimgreater{\ulen}{\xlen}%
             {\underset{#1}{\overset{#2}{\longrightarrow}}}
             {\xrightarrow[#1]{#2}}}}%
      {\xrightarrow[#1]{#2}}
   }
\newcommand{\xyra}[2][]{%
   \settowidth{\xlen}{$\xrightarrow[#1]{#2}$}%
   \ifbool{@display}%
      {\settowidth{\olen}{$\overset{#2}{\longrightarrow}$}%
       \settowidth{\ulen}{$\underset{#1}{\longrightarrow}$}%
       \ifdimgreater{\olen}{\xlen}%
          {\mathrel{\xymatrix@M=.12ex@C=3.2ex{\ar[r]^-{#2}_-{#1} &}}}%
          {\ifdimgreater{\ulen}{\xlen}%
             {\mathrel{\xymatrix@M=.12ex@C=3.2ex{\ar[r]^-{#2}_-{#1} &}}}
             {\mathrel{\xymatrix@M=.12ex@C=\the\xlen{\ar[r]^-{#2}_-{#1} &}}}}}%
      {\mathrel{\xymatrix@M=.12ex@C=\the\xlen{\ar[r]^-{#2}_-{#1} &}}}%
   }
\newcommand{\xla}[2][]{%
   \ifbool{@display}%
      {\settowidth{\olen}{$\overset{#2}{\longleftarrow}$}%
       \settowidth{\ulen}{$\underset{#1}{\longleftarrow}$}%
       \settowidth{\xlen}{$\xleftarrow[#1]{#2}$}%
       \ifdimgreater{\olen}{\xlen}%
          {\underset{#1}{\overset{#2}{\longleftarrow}}}%
          {\ifdimgreater{\ulen}{\xlen}%
             {\underset{#1}{\overset{#2}{\longleftarrow}}}
             {\xleftarrow[#1]{#2}}}}%
      {\xleftarrow[#1]{#2}}
   }
\newcommand{\isoarrow}{%
   \ifbool{@display}{\overset{\sim}{\longrightarrow}}{\xrightarrow\sim}%
   }
\begin{document}
\thispagestyle{plain}

\author{Jeffrey Adams}
\author{Xuhua He}
\author{Sian Nie}

%use one or the other of these:

% using \documentclass[a4paper, reqno, 10pt]{article}:
%\affil{Department of Mathematics, University of Maryland, jda@math.umd.edu}
%\affil{Department of Mathematics, University of Maryland, xuhuahe@math.umd.edu}
%\affil{Institute of Mathematics, Academy of Mathematics and Systems Science, Chinese Academy of Sciences, 100190, Beijing, China
%  , niesian@amss.ac.cn}

% %\documentclass[a4paper, reqno, 12pt]{amsart}
\address{Department of Mathematics, University of Maryland, jda@math.umd.edu}
\address{The Institute of Mathematical Sciences and Department of Mathematics, The Chinese University of Hong Kong, Shatin, N.T., Hong Kong, xuhuahe@gmail.com}
\address{Institute of Mathematics, Academy of Mathematics and Systems Science, Chinese Academy of Sciences, 100190, Beijing, China, niesian@amss.ac.cn}

%\date{}                     %% if you don't need date to appear

\title[Partial orders]{Partial orders on conjugacy classes in the Weyl group and on unipotent conjugacy classes}

\date{\today}

\begin{abstract}
Let $G$ be a reductive group over an algebraically closed field and let $W$ be its Weyl group. In a series of papers, Lusztig introduced a map from the set $[W]$ of conjugacy classes of $W$ to the set $[G_u]$ of unipotent classes of $G$. This map, when restricted to the set of elliptic conjugacy classes $[W_e]$ of $W$, is injective. In this paper, we show that Lusztig's map $[W_e] \to [G_u]$ is order-reversing, with respect to the natural partial order on $[W_e]$ arising from combinatorics and the natural partial order on $[G_u]$ arising from geometry. 
\end{abstract}

\keywords{Reductive groups, Weyl groups, conjugacy classes, partial orders}
\subjclass[2010]{Primary: 20G07, Secondary: 06A07, 20F55, 20E45}

\maketitle

\tableofcontents

\section*{Introduction}

\subsection{Lusztig's map}
Let $G$ be a connected reductive group over an algebraically closed
field $\BF$ and let $W$ be the Weyl group of $G$. Let $\Gu$ be the set of
unipotent conjugacy classes in $G$ and $\Wc$ be the set of conjugacy
classes of $W$. Lusztig defined a surjective map
$\Phi: \Wc \to \Gu$ (\cite[Theorem 0.4]{L1}).
This construction was generalized to twisted conjugacy classes in \cite{L3}.

Roughly speaking, the map $\Phi$ is constructed as follows. 
Let $\CC \in \Wc$ and $w \in \CC_{\min}$ be a minimal length element of
$\CC$. We look at the intersection of  the Bruhat double coset $B w B$ with
unipotent conjugacy classes and we select the minimal
unipotent class which gives a nonempty intersection. It is pointed out
in \cite[\S 0.1]{L1} that ``the fact that the procedure actually works
is miraculous''.  

In this paper we are concerned with the set of elliptic conjugacy classes
$\Wec \subset \Wc$.  The restriction of $\Phi$ to $\Wec$ is  injective,
and the image contains all distinguished unipotent
conjugacy classes (\cite[Proposition 0.6]{L1}).

The set $\Gu$ has the natural partial ordering by closure
relations, denote $\leu$. On the other hand the
second-named author \cite{He07} (see also \cite[\S
1.10.3]{He-CDM}) introduced a partial order on $\Wec$,
induced from the Bruhat order on  minimal length elements of the
elliptic conjugacy classes of $W$, which we denote $\leW$.

It is a natural question to consider how Lusztig's map
behaves with respect to these partial orders.
Dudas, Michel and the second-named author
\cite{DHM} conjectured that $\Phi$ gives an order-reversing bijection
from $\Wec$ to $\Phi(\Wec) \subset \Gu$. Michel \cite{DHM} verified the exceptional groups by computer. 

In \cite[Conjecture 3.7]{DM}, Dudas and Malle conjectured that a
similar result holds for twisted type $A$. In \cite[Proposition 3.11
\& 3.14]{DM} they verified the conjecture for some special family of
twisted elliptic conjugacy classes of type $A$. It is also verified in
\cite{DM} by computer that the conjecture holds for twisted $A_n$ with
$n \le 10$. The compatibility of the partial orders is used in
\cite[\S 5]{DM} to study the decomposition numbers of the unipotent
$l$-blocks of finite unitary groups.

Our first main result is  that $\Phi$ is order reversing in the following sense.

\begin{theorem}
\label{t:main}
Let $\CC, \CC' \in \Wec$. Then $\CC \leW\CC'$ if and only if
$\Phi(\CC') \leu\Phi(\CC)$.
\end{theorem}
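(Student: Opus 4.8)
A direct geometric attack through Lusztig's construction does not seem to work easily: for a general $w$ (not of minimal length) the smallest unipotent class meeting $BwB$ is not monotone in $w$ for the Bruhat order — it is the trivial class when $w=1$ but certainly not when $w=w_0$ — so the minimal-length structure of $\CC_{\min}$ must enter in an essential way. The cleanest route I see is to reduce Theorem~\ref{t:main} to an explicit combinatorial statement. Since $\leW$, $\leu$ and $\Phi$ are all compatible with the decomposition of $G$ into simple factors, one may assume $W$ irreducible; for exceptional $W$ the theorem is Michel's computer check in~\cite{DHM}, and type $A_{n-1}$ is vacuous because $\Wec$ is then a single class. So the content is in types $B_n$, $C_n$, $D_n$, and there I would assemble four ingredients: (i) the parametrization of $\Wec$ by negative cycle types — all partitions of $n$ in types $B_n,C_n$, and partitions of $n$ with an even number of parts in type $D_n$, each ``very even'' partition (all parts even) giving two classes; (ii) the description of $\leW$ on these parameters from~\cite{He07}, which is the \emph{reverse} of the dominance order, suitably refined for the split classes in type $D_n$; (iii) the parametrization of $\Gu$ by partitions of $2n,2n+1,2n$ with $\leu$ computed by dominance, again refined for very even partitions in type $D_n$; and (iv) Lusztig's explicit formula for $\Phi|_{\Wec}$ from~\cite{L1}, sending the class of $\lambda$ to the collapse of $2\lambda$ perturbed by a bounded, type-dependent ``alternating'' correction, with the two split classes in type $D_n$ matched to the two very even classes labelled by $2\lambda$.

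With these ingredients, Theorem~\ref{t:main} becomes the statement that this explicit map on partitions is an order \emph{anti}-isomorphism from $(\Wec,\leW)$ onto its image in $(\Gu,\leu)$. The anti-isomorphism is forced by (ii): the $\leW$-minimal class is the Coxeter class, of minimal length $\rank(W)$ among elliptic classes, and it carries the \emph{dominance-maximal} parameter, while the $\leW$-maximal (longest) elliptic class carries the dominance-minimal parameter. As $\lambda\mapsto 2\lambda$ is manifestly dominance-monotone and the alternating correction shifts the $k$-th partial sum by an amount depending only on $k$, the implication $\CC\leW\CC'\Rightarrow\Phi(\CC')\leu\Phi(\CC)$ reduces to the (standard) monotonicity of the collapse operation, which by definition sends a partition to the largest valid partition it dominates.

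The main obstacle I expect is the converse, $\Phi(\CC')\leu\Phi(\CC)\Rightarrow\CC\leW\CC'$: one must show the explicit map \emph{reflects} the order, i.e.\ that $\leW$-incomparable classes have $\leu$-incomparable images. The delicate step is the collapse, because partitions that are incomparable before collapsing can become comparable after; one must prove this does not occur for the partitions that actually arise, a combinatorial lemma in the spirit of those behind the order-reversal of Lusztig--Spaltenstein duality, exploiting that doubling makes the relevant partial sums differ by an even amount that the correction and the collapse cannot bridge. A second difficulty is type $D_n$, where one has to follow the splitting of the very even classes on both sides, match the branches correctly, analyze the order relations between a split pair and the classes immediately above and below it, and keep track of whether one is working in $\SO_{2n}$ or $\O_{2n}$. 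Finally, a brief check is needed that the characteristic of $\BF$ plays no role — the portion of the closure order on $\Gu$ relevant here, and the map $\Phi|_{\Wec}$, being the same across characteristics — so that the combinatorial reduction may be carried out in characteristic $0$.
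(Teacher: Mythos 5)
Your overall architecture (reduction to simple factors, computer verification for exceptional types, then a purely combinatorial statement in types $B$, $C$, $D$ obtained by parametrizing both sides by partitions) matches the paper's, but there are two genuine gaps. The more serious one is your ingredient (ii): the identification of $\leW$ on elliptic classes with the reverse of the dominance order is \emph{not} available in \cite{He07} and cannot be quoted --- it is precisely Proposition \ref{p:order_reversing}, one of the two halves of the theorem, and its proof occupies Section \ref{s:classical_proof}. What \cite{He07} supplies is only the explicit minimal length representatives $w_\alpha$; comparing these gives the easy implication $\alpha\ge\beta\Rightarrow w_\alpha\le w_\beta$, hence $\CC_\alpha\leW\CC_\beta$. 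The converse is the hard part: one must show that if \emph{some} $w\in\CC_\alpha$ satisfies $w\le w_\beta$ then $\alpha\ge\beta$, and since an elliptic class has unboundedly many minimal length elements this requires an inequality valid for \emph{every} element of $\CC_\alpha$ (Proposition \ref{B-ineq}, proved via the $w[i,j]$ description of the Bruhat order). Your proposal assumes this away as known input.

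The second gap is on the geometric side. You correctly identify order reflection ($\Phi(\CC')\leu\Phi(\CC)\Rightarrow\CC\leW\CC'$) as the delicate point --- the collapse and the alternating correction can in principle create new comparabilities --- but you leave it as ``a combinatorial lemma in the spirit of\dots'' rather than proving it, and your proposed reduction goes in the unhelpful direction: you push everything into characteristic $0$, which is exactly where the formula for $\Phi$ is complicated. The paper goes to characteristic $2$ instead: Spaltenstein's map $\pi_2:[G_{0,u}]\to[G_{2,u}]$ is a poset isomorphism onto its image satisfying $\pi_2\circ\Phi_0=\Phi_2$, and in characteristic $2$ the restriction of $\Phi$ to elliptic classes is essentially $\alpha\mapsto(2\alpha,\epsilonmax)$ (possibly with an extra part $1$), for which the equivalence $\Phi(\CC_\alpha)\leu\Phi(\CC_\beta)\Leftrightarrow\alpha\le\beta$ is immediate from the explicit closure order on $\tPminus(2n)$; no analysis of $\psi$ or of collapses is needed. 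A smaller point: elliptic classes of $W(D_n)$ do \emph{not} split for very even partitions (splitting occurs only for classes whose cycles are all positive of even length, which are never elliptic), and the split very even unipotent classes of $\SO(2n)$ do not lie in the image of $\Phi_e$, so the branch-matching you anticipate in type $D_n$ does not actually arise.
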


This holds also in the twisted setting.

\subsection{A similar phenomenon for affine Weyl groups}

Before we discuss the strategy towards Theorem \ref{t:main}, we make a
short digression and discuss a similar phenomenon.
For simplicity, we only discuss  split groups here but the result
holds in general.

Let $G$ be a connected reductive group. The Frobenius morphism $\s$ of
$\overline{\mathbb F_q}$ over $\mathbb F_q$ induces a Frobenius
morphism $\s$ on $G(\overline{\mathbb F_q}((t)))$. Let $B(G)$ be the
set of $\s$-twisted conjugacy classes on
$G(\overline{\mathbb F_q}((t)))$. Let $\tW$ be the Iwahori-Weyl group
and let $[\tW]$ be the set of conjugacy classes of $\tW$. Inside
$[\tW]$, there is a special subset, the set of straight
conjugacy classes, which we denote by $[\tW_{str}]$. It is
proved in \cite{He-Ann} that there is a natural bijection
$\tilde \Phi: [\tW_{str}] \to B(G)$, which is induced from
any lifting $\tW \to G(\overline{\mathbb F_q}((t)))$.

The closure relation gives the partial order on $B(G)$. There is also
a partial order on $[\tW_{str}]$ induced from the Bruhat
order on $\tW$, similar to the definition of the partial order
$\leW$ on $\Wec$ we discussed earlier. It is proved in
\cite[Theorem B]{He-KR} that these two partial orders coincide via the
bijection $\tilde \Phi: [\tW_{str}] \to B(G)$.

The proof uses the reduction method of Deligne and Lusztig \cite{DL},
some remarkable combinatorial properties on the straight conjugacy
classes \cite{HN2} and a deep result in arithmetic geometry, the
purity theorem for the Newton stratification associated with
$F$-crystal obtained by de Jong-Oort \cite{JO}, Hartl-Viehmann
\cite{HV}, Viehmann \cite{Vi} and Hamacher \cite{Ha}.

\subsection{Difference between the finite and affine cases}

Although in both finite and affine cases, we compare the partial
orders arising from combinatorics and from geometry, there are some
essential differences between the two cases we discussed above.

First, in the affine case, we do not consider the conjugation action
on the loop groups, but the Frobenius-twisted conjugacy classes
instead. Although the study of the Frobenius-twisted conjugacy classes
are quite involved, it is, in some sense, simpler than the unipotent
conjugacy classes. Second, the construction of the map from conjugacy
classes of Weyl groups to the (twisted) conjugacy classes of reductive
groups in the finite and affine case are quite different. In the
affine case, the map is induced from any lifting
$\tW \to G(\overline{\mathbb F_q}((t)))$, while in the finite case the
construction is rather a miracle. Finally, in the affine case, the map
preserves the partial orders, while in the finite case, as we show in
this paper, the map reverses the partial orders.

\subsection{The strategy}

Now we discuss the strategy towards the proof of Theorem
\ref{t:main}. For exceptional groups, we verify the statement by
computer in Section \ref{6-exc}. For classical groups, the elliptic conjugacy classes are parametrized by certain partitions. For such  partition $\alpha$, we denote by $\CC_\alpha$ the corresponding elliptic conjugacy class. We show that 
$$
\Phi(\CC_\alpha)\leu\Phi(\CC_\beta)\Leftrightarrow \alpha\le\beta\Leftrightarrow \CC_\beta\leW\CC_\alpha.
$$

Note that the unipotent classes in the classical groups are associated
to certain partitions. However, the partitions associated to elliptic
conjugacy classes and the unipotent classes of the same classical
group, are usually partitions of different integers. For example, in
type $B_n$, the elliptic conjugacy classes correspond to partitions of
$n$ while the unipotent conjugacy classes correspond to certain
partitions of $2n+1$. The map from partitions of $n$ to partitions of
$2n+1$ induced from Lusztig's map in characteristic $0$ (and any
characteristic $\neq 2$) are rather complicated; however, in
characteristic $2$ the map is  rather simple and is essentially the map
$\alpha \mapsto (2 \alpha, 1)$. We then use the Lusztig-Spaltenstein
map from the set of unipotent classes in characteristic $0$ to the set
of unipotent classes in characteristic $2$ to reduce the
statement
\[\tag{*}\Phi(\CC_\alpha)\leu\Phi(\CC_\beta)\Leftrightarrow
  \alpha\le\beta\] in characteristic $0$ (and any characteristic
$\neq 2$) to the statement (*) in characteristic $2$, which is obvious
since the map $\alpha \mapsto (2 \alpha, 1)$. For other classical
groups, the statement
$\Phi(\CC_\alpha)\leu\Phi(\CC_\beta)\Leftrightarrow \alpha\le\beta$ is
verified in the same way. This is done in section \ref{3-cl}.

We then verify the statement 
\[\tag{**} \alpha\le\beta\Leftrightarrow \CC_\beta\leW\CC_\alpha\] for classical groups. 

By definition, $\CC_{\beta} \leW \CC_{\alpha}$ if and only if there exist minimal length elements $w_\beta \in \CC_{\beta}$ and $w_\a \in \CC_{\a}$ such that $w_{\beta} \le w_\a$. In \cite{He07}, the second-named author constructed explicit minimal length representatives for any elliptic conjugacy class of Weyl groups of classical type. If $\alpha \le \beta$, then we have the desired relation between those minimal length representatives with respect to the Bruhat order. This proves the $\Rightarrow$ direction of the statement (**) for classical groups. 

The $\Leftarrow$ direction of the statement (**) is more involved. The difficulty is that there are many minimal length elements in a given elliptic conjugacy class and the number is unbounded as the rank of the group becomes larger. To overcome the difficulty, we use the explicit description of the Bruhat order for the Weyl groups of classical type (see \cite{bb}).  For example, for any elements $w$ in the Weyl group of type $B_n$ and $-n \le i, j \le n$, we may associate a nonnegative integer $w[i, j]$. Then $w \le w'$ if and only if $w[i, j] \le w'[i, j]$ for all $-n \le i, j \le n$. So there are $4 n^2$ inequalities to check. Fortunately, to study $\CC_{\beta} \leW \CC_{\alpha}$, one only needs to investigate $l$ inequalities among all the $4 n^2$ inequalities, where $l$ is the number of parts in the partition $\beta$. And these $l$ inequalities imply that $\alpha \le \beta$. This is done in section \ref{s:classical_proof}.

\smallskip

\noindent {\bf Acknowledgements:} The idea that the partial orders on the conjugacy classes of Weyl groups and the unipotent classes of algebraic groups might be related was initiated in the private conversation of the second-named author with Olivier Dudas and Jean Michel. The explicit description of the Bruhat order for classical groups was pointed out to us by Thomas Lam. We also thank George Lusztig and Zhiwei Yun for helpful discussions. 

\section{Main result}

\subsection{Preliminary} Let $G$ be an affine algebraic group over an algebraically closed field $\BF$ of characteristic $p \ge 0$ such that the identity component $G^0$ of $G$ is reductive. Let $T$ be a maximal torus of $G^0$ and $B \supset T$ be a Borel subgroup of $G^0$. Let $W^0=N_{G^0}(T)/T$ be the Weyl group of $G^0$ and let $W=N_G(T)/T$ be the (extended) Weyl group of $G$. The length function $\ell$ on $W^0$ extends in a unique way to a length function on $W$, which we still denote by $\ell$. Let $S \subset W^0$ be a set of simple reflections.

Let $D$ be a connected component of $G$ and $W^D=(N_G(T) \cap D)/T$ be a left/right $W^0$-coset of $W$. By \cite[Section 1.4]{L03}, $W^D$ contains a unique element $\e_D$ of length $0$. The conjugation action of $\e_D$ on $W$ is a length-preserving automorphism. Let $[W]$ be the set of $W^0$-conjugacy classes of $W$ and $[W^D]$ be the set of $W^0$-conjugacy classes of $W$ that intersect $D$. An element $w \in W^D$ (or its $W^0$-conjugacy class $C$ in  $W$) is said to be {\it elliptic} if for any $J \subsetneqq S$ with $\Ad(\e_D)(J)=J$, we have $C \cap W_J \e_D=\emptyset$. Let $\WDec$ the set of elliptic $W^0$-conjugacy classes of $W$ that intersect $W^D$.

From now on, we assume that $D$ contains a unipotent element of $G$. Let $[D_u]$ be the set of $G^0$-conjugacy classes of $D$ which are unipotent. In \cite{L1} and \cite{L3}, Lusztig introduced a  map $\Phi: [W^D] \to [D_u]$. It is proved in loc. cit. that

\begin{itemize}
\item The map $\Phi: [W^D] \to [D_u]$ is surjective.

\item The restriction to elliptic conjugacy classes $\Phi_e: \WDec \to [D_u]$ is injective.
\end{itemize}

\subsection{Partial orders} We define a partial order on unipotent classes by the closure relations as usual:
$\CC\leu \CC'$ if $\CC\subset \overline{\CC'}$.

Now we recall the partial order on $\WDec$ introduced in \cite[\S 4.7]{He07}. Let $\CC, \CC' \in \WDec$. We denote by $\CC_{\min}$ (respectively $\CC'_{\min}$) the set of minimal length elements in $\CC$ (respectively $\CC'$). Then the following conditions are equivalent:

\begin{enumerate}
  \item For some $w \in \CC_{\min}$, there exists $w' \in \CC'_{\min}$ such that $w' \le w$;
  \item For any $w \in \CC_{\min}$, there exists $w' \in \CC'_{\min}$ such that $w' \le w$.
\end{enumerate}

If these conditions are satisfied, then we write $\CC' \leW \CC$. By the equivalence of the conditions (1) and (2) above, the relation $\leW$ is transitive. This gives a natural partial order on the set $\WDec$.

By \cite[Corollary 4.5]{He07}, $\CC'_{\min}$ is the set of minimal elements in $\CC'$ with respect to the Bruhat order of $W$. Thus the conditions (1) and (2) above are also equivalent to the following conditions:

\begin{enumerate}
\setcounter{enumi}{2}
  \item For some $w \in \CC_{\min}$, there exists $w' \in \CC'$ such that $w' \le w$;
  \item For any $w \in \CC_{\min}$, there exists $w' \in \CC'$ such that $w' \le w$.
\end{enumerate}

The condition (3) will be used to study the partial order on $\WDec$ for exceptional groups.

Now we state the main theorem of the paper.

\begin{theorem}\label{main}
The map $\Phi_e: \WDec \to [D_u]^{op}$ gives a bijection from the poset $\WDec$ to its image. Here $[D_u]^{op}$ is the same as $[D_u]$ as a set, but with reversed partial order.

In other words let $\CC, \CC' \in \WDec$. Then $\CC' \leW \CC$ if and only if $\Phi(\CC) \leu \Phi(\CC')$.
\end{theorem}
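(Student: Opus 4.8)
The plan is to treat the exceptional and the classical groups separately, and in the classical case to factor Lusztig's map through the dominance order on partitions. For $G$ of exceptional type, and for each relevant connected component $D$, the posets $\WDec$ and $[D_u]$ are finite and explicitly known: one can enumerate the minimal length elements of every elliptic class, the closure order on $[D_u]$ is tabulated, and the values of $\Phi_e$ are recorded in Lusztig's papers. Using reformulation (3) of $\leW$ --- which only compares a single minimal length element of one class with arbitrary elements of the other --- the order $\leW$ is computable by a finite search, so for exceptional groups the theorem reduces to a computer verification; this is Section \ref{6-exc}.

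For $G$ of classical type the elliptic classes in $\WDec$ are parametrized by partitions (of $n$ in types $B_n$ and $C_n$, and by appropriate partitions in type $D_n$ and in the twisted cases). Writing $\CC_\alpha$ for the class attached to $\alpha$, the theorem follows from the two biconditionals
\[
(\ast)\quad \Phi(\CC_\alpha)\leu\Phi(\CC_\beta)\iff \alpha\le\beta,
\qquad
(\ast\ast)\quad \alpha\le\beta\iff \CC_\beta\leW\CC_\alpha,
\]
where $\le$ is the dominance order, and I would prove the two independently.

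For $(\ast)$: Lusztig's map sends $\CC_\alpha$ to a unipotent class whose partition is produced from $\alpha$ by an explicit recipe. In characteristic $0$, and any characteristic $\ne 2$, this recipe is intricate, but in characteristic $2$ it collapses to (essentially) the rule $\alpha\mapsto(2\alpha,1)$ in type $B$, and to analogous rules in the other classical types, for which $(\ast)$ is immediate because passing from $\alpha$ to $(2\alpha,1)$ is an order embedding for dominance. To reach arbitrary characteristic I would use the Lusztig--Spaltenstein map from unipotent classes in characteristic $0$ to those in characteristic $2$, which is order preserving and compatible with $\Phi$, to transport $(\ast)$ from the characteristic $2$ picture back to characteristic $0$ and to any characteristic $\ne 2$. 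This is Section \ref{3-cl}.

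For $(\ast\ast)$: the implication $\alpha\le\beta\Rightarrow\CC_\beta\leW\CC_\alpha$ is the easier half --- from the explicit minimal length representatives $w_\alpha\in(\CC_\alpha)_{\min}$ and $w_\beta\in(\CC_\beta)_{\min}$ constructed in \cite{He07} one checks directly, by comparing reduced expressions, that $\alpha\le\beta$ forces $w_\beta\le w_\alpha$ in the Bruhat order, which gives $\CC_\beta\leW\CC_\alpha$ by definition. The converse $\CC_\beta\leW\CC_\alpha\Rightarrow\alpha\le\beta$ is the main obstacle, since an elliptic class has many minimal length elements and their number is unbounded in the rank, so one cannot argue with fixed representatives. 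Here I would use the combinatorial description of the Bruhat order on classical Weyl groups from \cite{bb}: in type $B_n$ one attaches to $w$ and each pair $-n\le i,j\le n$ a nonnegative integer $w[i,j]$, with $w\le w'$ iff $w[i,j]\le w'[i,j]$ for all $i,j$, a priori $4n^2$ conditions. The point is that only $\ell$ of these positions --- where $\ell$ is the number of parts of $\beta$ --- are sensitive to the partitions, and evaluating $w'[i,j]$ and $w[i,j]$ there, for $w'$ any minimal length element of $\CC_\beta$ and $w$ any minimal length element of $\CC_\alpha$, turns the relation $w'\le w$ into exactly the $\ell$ partial-sum inequalities that define $\alpha\le\beta$. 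Executing this, with the parallel bookkeeping in types $C$, $D$ and the twisted cases, is Section \ref{s:classical_proof}, where the bulk of the combinatorial work lies.
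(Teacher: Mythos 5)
Your proposal follows the paper's own strategy essentially verbatim: computer verification for exceptional types, and for classical groups the factorization through the dominance order via the two biconditionals, with the characteristic-$2$ reduction via the Lusztig--Spaltenstein map for the geometric half and the $w[i,j]$ description of the Bruhat order (evaluated at the $\ell$ positions determined by the partial sums of $\beta$) for the combinatorial half. This matches Sections \ref{3-cl}, \ref{s:classical_proof} and \ref{6-exc} of the paper, so there is nothing further to compare.
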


\subsection{Reduction to almost simple groups} In this subsection, we show that to prove Theorem \ref{main}, it suffices to consider the case where $G^0$ is simple. The reduction procedure is the similar to \cite[\S 1.5--1.11]{L3}.

First, we may replace $G$ by the subgroup generated by $D$. Next, let $G'=G/Z(G^0)$ and $\pi: G \to G'$ be projection map. As the Weyl groups of $G$ and $G'$ are naturally identical, Theorem \ref{main} holds for $(G, D)$ if and only if it holds for $(G', D')$, where $D'=\pi(D)$.

Now we may assume that $G^0$ is semisimple and simply connected. We write $G^0$ as $G^0=G_1 \times \ldots \times G_k$, where each $G_i \neq \{1\}$ is a minimal closed connected normal subgroup of $G$. For any $i$, let $G'_i=G/(G_1 \times \ldots \times \hat G_i \times \ldots \times G_k)$ and $D_i$ be the image of $D$ in $G'_i$. Let $G'=G'_1 \times \ldots \times G'_k$. We may then identify $G$ with a closed subgroup of $G'$ with the same identity component. Under this identification, $D$ becomes $D_1 \times \ldots \times D_k$. Let $W'$ be the extended Weyl group of $G'$ and $W'_i$ be the extended Weyl group of $G'_i$. Then $W'=W'_1 \times \ldots \times W'_k$ and we may identify $W^D$ with $(W'_1)^{D_1} \times \ldots \times (W'_k)^{D_k}$. Under this identification, $$\WDec=[(W'_1)^{D_1}_e] \times \ldots \times [(W'_k)^{D_k}_e]$$ and $\leW$ on $\WDec$ coincides with $\preceq_{W, 1} \times \ldots \times \preceq_{W, k}$ on $[(W'_1)^{D_1}_e] \times \ldots \times [(W'_k)^{D_k}_e]$. By \cite[\S 1.8]{L3}, $\Phi_e$ on $\WDec$ coincides with $\Phi_{e, 1} \times \ldots \times \Phi_{e, k}$ on $[(W'_1)^{D_1}_e] \times \ldots \times [(W'_k)^{D_k}_e]$. Thus if Theorem \ref{main} holds for each $(G'_i, D_i)$, then it holds for $(G, D)$.

Now we may assume that $G^0$ is semisimple, simply connected and that $G$ has no nontrivial closed connected normal subgroups. By \cite[\S 1.9]{L3}, $G^0=H_1 \times \ldots \times H_{m}$, where $H_i$ are connected, simply connected, almost simple, closed subgroups of $G^0$ and there exists a $c \in D$ such that $H_i=c^i H_1 c^{-i}$ for $0 \le i \le m-1$ and $c^m H_1 c^{-m}=H_1$. Let $G'$ be the subgroup of $G$ generated by $H_1$ and $c^m$ and $D'=c^m H_0$ be a connected component of $G'$. By \cite[\S 1.9]{L3}, we may identify $[D_u]$ with $[D'_u]$ and $[W^D]$ with $[(W')^{D'}]$ and under this identification, we have the following commutative diagram
\[
\xymatrix{
[(W')^{D'}] \ar@{=}[r] \ar[d]_-{\Phi'} & [W^D] \ar[d]^-{\Phi} \\
[D'_u] \ar@{=}[r] & [D_u].
}
\]
Thus Theorem \ref{main} holds for $(G, D)$ if and only if it holds for $(G', D')$.

Therefore, to prove Theorem \ref{main}, it suffices to consider the cases where $G^0$ is almost simple.

\section{Unipotent conjugacy classes of classical groups}
\label{s:classical}

In this section, we recollect some facts on the unipotent conjugacy
classes of classical groups, over an
algebraically closed field $\BF$ of any characteristic. We follow \cite{Spa}*{Section I.2}.

Suppose $V$ is a finite-dimensional vector
space over $\BF$.
We define a disconnected group containing $\GL(V)$ $(\dim(V)\ge 3)$ as in
\cite{Spa}*{Section I.2.7}.
Define $\GLdagger(V)=G_0\cup G_1$ where $G_0=GL(V)$ and $G_1$ is
the set of non-singular bilinear forms $\phi:V\times V\rightarrow \BF$.
We define the product structure on $G$ as follows. The product on
$G_0$ is the usual one. If $g\in G_0,\phi\in G_1$ then $(g\phi)(v,w)=\phi(g\inv v,w)$
and $(\phi g)(v,w)=\phi(v,gw)$. If $\phi,\psi\in G_1$ then $\phi\psi$ is the unique element of $G_0$ satisfying
$\phi((\phi\psi)v,w)=\psi(w,v)$.

It is easy to see $\GLdagger(V)$ is a group and
$\GLdagger(V)/\GL(V)\simeq\Ztwo$.  Also $\phi$ is symmetric if and
only if $\phi^2=1$, so there is a unique $\GL(V)$-conjugacy class of
such elements. Choose a basis of $V$ and identify $\GL(V)$ with
$\GL(n)$, and set $\delta(v,w)=v\cdot w$. Then $\delta^2=1$ and
$\delta g\delta=\,^tg\inv$ for $g\in \GL(V)$.

By a {\it classical group} we mean one of the groups $\GL(n), \Sp(2n),\SO(n), \O(n)$ or $\GLdagger(n)$.
The groups $\GL(n)$ and $\Sp(2n)$ are connected.
The identity component of $\O(n)$ is $\SO(n)$, and $\O(n)/\SO(n)$ is trivial if $p=2$, and has order $2$ otherwise.

Let $\CP(n)$ be the set of partitions of $n$. We write a partition of $n$ as
$\alpha=(\alpha_1,\ldots, \alpha_\ell)$ with $\alpha_1\ge \dots\ge \alpha_\ell \ge 0$ and $\sum\alpha_i=n$.
On occasion we do allow some $\a_i$ to be $0$, i.e., we regard $(\a_1, \ldots, \a_\ell)$ and $(\a_1, \ldots, \a_\ell, 0)$ to be the same partition. 
We define the standard partial order on partitions of the same integer $n$: $\alpha\le\beta$ if for all $k$,
$\sum_{i=1}^k\alpha_i=\sum_{i=1}^k\beta_k$.

We identify partitions with Young diagrams, and define the transpose
partition as usual. If $\alpha=(\alpha_1,\dots,\alpha_\ell)$ is a
partition we let $\alpha^*=(\alpha_1^*,\dots, \alpha_m^*)$ be the
transpose partition. In particular $\alpha_1^*$ is the number of rows
of $\alpha$.

Given a partition $\alpha$ define the multiplicity function $m_\alpha:\Zg\rightarrow \Zg$ as usual:
$m_\alpha(k)=|\{j\mid \alpha_j=k\}|$. In particular $m_\alpha(k)=0$ for $k=0$ or $k>\alpha_1$.
For $\k=\pm 1$ let
$$
\CP_\k(n)=\{\alpha \in \CP(n)\mid m_\alpha(i)\text{ is even if }  (-1)^i=\kappa\}
$$
Let $\CP(n)_0$ be the partitions of $n$ with an even number of parts, i.e. with $\a_1^*$ even and $\CP(n)_1$ be the partitions of $n$ with an odd number of parts, i.e. with $\a_1^*$ odd.
Let $\CP(n)^{\text{odd}}$  be the partitions consisting only of odd parts.

\subsection{Unipotent classes in good characteristic}
\label{s:good}

If $G=\GLdagger(n), \Sp(2n)$ or $\O(n)$ then the characteristic $p=2$ is said to be {\it bad} (for $G$).
Otherwise, including all groups in characteristic $0$, the characteristic is said to be {\it good}.

Suppose the characteristic of $\BF$ is good. Then every unipotent
element of $G$ is contained in $G^0$, and
the unipotent classes are parametrized as follows.

\begin{enumerate}
\item $\GL(n)$ or $\GLdagger(n)$: $\CP(n)$;
\item $\O(2n+1)$: $\CP_1(2n+1)$;
\item $\Sp(2n)$: $\CP_{-1}(2n)$;
\item $\O(2n)$: $\CP_1(2n)$.
\end{enumerate}

We also consider the unipotent conjugacy classes of $\SO(n)$.  If $n$
is odd the unipotent conjugacy class of $\SO(n)$ and $O(n)$ are in
bijection, and the same holds for $\SO(2n)$ if $n$ is odd.  If $n$
is even the unipotent $\SO(2n)$-conjugacy classes in $\SO(2n)$ are parametrized
by $\CP_1(2n)$, except that every partition with only even parts
corresponds to two classes; there are $p(n/2)$ of these classes where
$p$ is the partition function.
The $\O(2n)$ orbits which split into two $\SO(2n)$ orbits do not arise
in the image of Lusztig's map applied to elliptic conjugacy classes, so we do not need
to distinguish these two classes.

We write $\CU_{\alpha}$ for the unipotent class parametrized by a partition $\alpha$.
Then the partial order on unipotent conjugacy classes is given by the partial order on partitions:
$\CU_{\a} \leu \CU_{\b}$ if and only if $\a \le \b$.

\subsection{Unipotent conjugacy classes in bad characteristic}
\label{s:bad}

Suppose $G$ is a classical group and the characteristic $p$ of $\BF$ is
bad, in particular $p=2$.  There is a bijective algebraic group homomorphism
from $\Sp(2n)$ to $SO(2n+1)$ (although the inverse is not algebraic),
which induces a bijection of unipotent classes. Also
$SO(2n+1)=O(2n+1)$ so we do not need to consider these groups.

\subsubsection{The cases $G=Sp(2n)$  and $O(2n)$}

Consider a set $\{\omega,0,1\}$ where $\omega$ is a formal element,
satisfying $\omega<0<1$.  For $n\ge 1$ define
$\tPminus(n)$ to be the set of pairs $(\alpha,\epsilon)$, where
\begin{enumerate}
\item $\alpha\in \CP_{-1}(2n)$ (i.e. odd rows have even multiplicity);
\item $\epsilon:\Zg\rightarrow\{\omega,0,1\}$.
\end{enumerate}
The function $\epsilon$ is required to satisfy, for all $i\ge 0$:
$$
\epsilon(i)=
\begin{cases}
1,& \text{ if } i=0, G=Sp(n);\\
0,& \text{ if } i=0, G=O(n);\\
\omega,& \text{ if } i\text{ odd};\\
\omega,& \text{ if } i>0,\,m_\alpha(i)=0;\\
1,& \text{ if } i>0\text{ even},\, m_\alpha(i)\text{ odd};\\
0\text{ or }1,& \text{ if } i>0\text{ even},\, m_\alpha(i)>0 \text{ even}.
\end{cases}
$$
Note that $\tPminus(n)$ is empty if $n$ is odd, and $\epsilon(i)$ is determined by $\alpha$ except for even rows of even multiplicity.

\begin{proposition}
  If $p=2$ the unipotent conjugacy classes in $Sp(2n)$ or $O(2n)$ are
  in bijection with $\tPminus(2n)$.
\end{proposition}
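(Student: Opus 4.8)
The plan is to deduce the proposition from the classification of unipotent orbits in symplectic and orthogonal groups over $\Fbartwo$. This classification is a classical result (going back to work of Wall and Hesselink) and is presented in exactly the form we need in \cite{Spa}*{Section~I.2}; so rather than reprove it, I would (i) recall its statement and (ii) check that Spaltenstein's combinatorial labels for unipotent classes coincide with the pairs $(\alpha,\epsilon)\in\tPminus(2n)$ described above. In other words, the proposition is a translation of notation, and the work is in making the dictionary precise.

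First, the underlying linear algebra. Let $V$ be the natural $2n$-dimensional module, $u$ a unipotent element, and $N=u-\id$ the associated nilpotent endomorphism; its Jordan type is a partition $\alpha$ of $2n$. For $\Sp(2n)$ one has a $u$-invariant non-degenerate alternating form on $V$; for $\O(2n)$ with $p=2$ the bilinear form $B_Q$ attached to the quadratic form is itself alternating, so in both cases the standard normal-form theory for a nilpotent endomorphism carrying a compatible non-degenerate alternating form applies, and it forces every odd part of $\alpha$ to occur with even multiplicity, i.e.\ $\alpha\in\CP_{-1}(2n)$. This is condition (1) in the definition of $\tPminus(2n)$. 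I would also record here that in characteristic $2$ the exceptional isogeny $\Sp(2n)\to\SO(2n+1)$ induces a bijection on unipotent classes, so the symplectic case is literally the odd orthogonal problem; the even orthogonal case is then run in parallel but separately.

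Next, the finer invariant $\epsilon$. For each part size $i$ one has a canonical subquotient $W_i$ of $V$ on which $N^{i-1}$ induces a form, and in characteristic $2$ this form carries a quadratic refinement; the value $\epsilon(i)$ records its isometry type. When $i$ is odd there is no such refinement and one sets $\epsilon(i)=\omega$, as one does when $i$ does not occur in $\alpha$ (so $W_i$ is trivial). When $i$ is even with $m_\alpha(i)$ odd the space $W_i$ is odd-dimensional, the refined form is then forced, and $\epsilon(i)=1$; when $i$ is even with $m_\alpha(i)$ even and positive there are exactly two possibilities, recorded by $\epsilon(i)\in\{0,1\}$. The value $\epsilon(0)$ carries no geometry: it is simply a flag recording whether $G=\Sp(2n)$, in which case $\epsilon(0)=1$, or $G=\O(2n)$, in which case $\epsilon(0)=0$ --- this is what lets both groups be described by the single set $\tPminus(2n)$. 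With these identifications, $u\mapsto(\alpha,\epsilon)$ is precisely Spaltenstein's labelling, and his classification theorem is exactly the asserted bijection.

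To finish I would cross-check the counts: for a fixed admissible $\alpha$ the number of admissible $\epsilon$ is $2^{r(\alpha)}$, where $r(\alpha)$ is the number of even integers $i$ with $m_\alpha(i)$ even and positive, and I would confirm this matches the number of unipotent classes of Jordan type $\alpha$ listed in \cite{Spa}*{Section~I.2}. The one genuinely delicate step is pinning down the canonical subquotients $W_i$ and the induced quadratic forms on them when $i$ is even, since that is where the characteristic-$2$ phenomena live and where Spaltenstein's conventions must be followed verbatim; once that is in place, everything else is bookkeeping and a matching of notation.
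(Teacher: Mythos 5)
Your proposal is correct and takes essentially the same route as the paper: the paper states this proposition without proof, simply recalling Spaltenstein's classification (\cite{Spa}, Section I.2) and then writing out the dictionary $u\mapsto(\alpha,\epsilon)$ — Jordan type in $\CP_{-1}(2n)$ plus the invariant $\epsilon(i)$ defined via $\langle (g-1)^{i-1}v,v\rangle$ on $\ker(g-1)^i$, with $\epsilon(0)$ as the $\Sp$-versus-$\O$ flag — which is exactly the translation you describe. The only stylistic difference is that the paper handles $\O(2n)$ by noting each unipotent $\Sp(2n)$-class meets $\O(2n)$ in a single $\O(2n)$-class (rather than running the orthogonal case in parallel), but both reduce to the same citation.
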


We write $\CU_{\alpha,\epsilon}$ for the unipotent class parametrized by $(\alpha,\epsilon)$.

The map from unipotent classes to $\tPminus(n)$ is defined as
follows.  First assume $G=Sp(2n)$, and let $\langle\,,\,\rangle$ be
the symplectic form defining $G$.  We embed
$\phi:\Sp(2n)\rightarrow G^*=GL(2n)$ as usual.  If $g\in G$ is
unipotent then $\phi(g)\in G^*$ is unipotent, and so
corresponds to a partition  $\alpha$ of $2n$; it is easy to see $\alpha\in\CP_{-1}(2n)$.

Suppose $i>0$ is even and $m_\alpha(i)>0$. Set $\epsilon(i)=0$ if
$\langle (g-1)^{i-1}v,v\rangle=0$ for all $v\in \text{ker}(g-1)^i$, and $\epsilon(i)=1$ otherwise.
Together with the conditions above this defines $\epsilon$ uniquely.

Next, if $G=O(2n)$ we note that every unipotent conjugacy class in
$Sp(2n)$ intersects $O(2n)$ in a unique conjugacy class, and this
defines a bijection between unipotent conjugacy classes in $Sp(2n)$ and $O(2n)$.

Write $\CU_{\alpha,\epsilon}$ for
the unipotent conjugacy class associated to
$(\alpha,\epsilon)\in \tPminus(2n)$.

In the case of $G=O(2n)$ we need to distinguish between those
$G$-conjugacy classes contained in $\SO(2n)$ and those which are not.

Define $\tPminus(2n)_0\subset \tPminus(2n)$ to be the pairs $(\alpha,\epsilon)$ such that $\alpha_1^*$ is even, and
set $\tPminus(2n)_1=\tPminus(2n)\backslash \tPminus(2n)_0$.

\begin{lemma}
Suppose $(\alpha,\epsilon)\in \tPminus(2n)$.
Then $\CU_{\alpha,\epsilon}\subset \SO(2n)$ if and only if $(\alpha,\epsilon)\in\tPminus(2n)_0$.
\end{lemma}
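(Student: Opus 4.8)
The plan is, for $p=2$, to decide whether $\CU_{\alpha,\epsilon}$ lies in $\SO(2n)$ by evaluating the Dickson invariant on a representative. When $p=2$ there is the Dickson (pseudo-determinant) homomorphism $D\colon \O(2n)\to\Ztwo$ with $\ker D=\SO(2n)$. Since $D$ is a homomorphism into an abelian group, it is constant on conjugacy classes; hence $\CU_{\alpha,\epsilon}\subset\SO(2n)$ if and only if $D(g)=0$ for one, equivalently every, $g\in\CU_{\alpha,\epsilon}$.

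The computation then rests on the classical fact that, for $g\in\O(V)$ with $\dim V$ even and $p=2$, one has $D(g)\equiv\rank(g-\id)\pmod 2$. Granting this, take a unipotent $g\in\CU_{\alpha,\epsilon}$. By the construction of the parametrization, $g$ viewed in $\GL(2n)$ is unipotent of Jordan type $\alpha$, so $g-\id$ is nilpotent of rank $2n$ minus the number of Jordan blocks; that is, $\rank(g-\id)=2n-\alpha_1^*$, where $\alpha_1^*$ is the number of parts of $\alpha$. Since $2n$ is even, $D(g)\equiv\alpha_1^*\pmod 2$. Therefore $\CU_{\alpha,\epsilon}\subset\SO(2n)$ exactly when $\alpha_1^*$ is even, which is precisely the condition defining $\tPminus(2n)_0$; this is what the lemma asserts.

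The one nonroutine ingredient is the identity $D(g)\equiv\rank(g-\id)\bmod 2$, and I expect nailing it down to be the main point. One option is simply to cite it, as it is due to Dieudonn\'e and appears in standard treatments of the classical groups in characteristic $2$. Alternatively one proves it from generators: $\O(V)$ is generated by orthogonal transvections $t_v$ with $Q(v)\neq 0$, for which $\rank(t_v-\id)=1$ and $D(t_v)=1$, and one checks (this is the classical step) that $g\mapsto\rank(g-\id)\bmod 2$ is multiplicative, using $\Im(gh-\id)\subseteq\Im(g-\id)+\Im(h-\id)$. If one prefers to bypass the Dickson invariant altogether, a hands-on route is to realize $\CU_{\alpha,\epsilon}$ by an explicit representative built as an orthogonal direct sum of the standard indecomposable unipotent blocks attached to the parts of $\alpha$ and to evaluate the invariant block by block, using that it is additive over orthogonal direct sums and transparent for a single Jordan block; there the work is pure bookkeeping with the block decomposition.
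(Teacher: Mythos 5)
Your argument is correct, and it is genuinely different from what the paper does: the paper states this lemma with no proof at all, treating it as part of the recollection of Spaltenstein's classification (\cite{Spa}, I.2), where the identification of which classes meet $\SO(2n)$ comes out of the explicit construction of representatives as orthogonal direct sums of indecomposable unipotent blocks. Your route through the Dickson invariant is cleaner and self-contained: since $D\colon\O(2n)\to\Ztwo$ is a homomorphism with kernel $\SO(2n)$ in characteristic $2$, it is constant on conjugacy classes, and the identity $D(g)\equiv\rank(g-\id)\pmod 2$ immediately gives $D(g)\equiv 2n-\alpha_1^*\equiv\alpha_1^*\pmod 2$, because by the paper's construction the partition $\alpha$ is exactly the Jordan type of $g$ under $\O(2n)\subset\Sp(2n)\subset\GL(2n)$, so $g-\id$ has rank $2n$ minus the number of Jordan blocks. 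This matches the definition of $\tPminus(2n)_0$ and also the (commented-out) $\O(4)$ table in the paper, so the conclusion is right. What your approach buys is independence from the detailed block-by-block bookkeeping; what it costs is the reliance on the Dieudonn\'e--Dickson identity, which you should simply cite (e.g.\ Taylor, \emph{The Geometry of the Classical Groups}, Theorem 11.43). One caveat on your sketched from-generators proof: the containment $\Im(gh-\id)\subseteq g\,\Im(h-\id)+\Im(g-\id)$ only yields subadditivity of $\rank(g-\id)$, not multiplicativity mod $2$ --- indeed $g\mapsto\rank(g-\id)\bmod 2$ is \emph{not} a homomorphism on $\GL(V)$ (a regular unipotent $g$ in $\GL_3$ over $\Fbartwo$ has $\rank(g-\id)=2$ but $\rank(g^2-\id)=1$), so the orthogonality of the transformations must enter essentially; this is exactly the nontrivial content of the cited theorem. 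Since your primary route is to cite it, the proof stands.
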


Thus $\tPminus(2n)_0$ (respectively $\tPminus(2n)_1$) is in bijection with the unipotent  $\O(2n)$ conjugacy classes in $\SO(2n)$ (respectively
$\O(2n)\backslash\SO(2n)$).

Finally we consider
unipotent $\SO(2n)$-conjugacy classes.
If $(\alpha,\epsilon)\in \tPminus(2n)_0$ then
 $\CU_{\alpha,\epsilon}\subset \SO(2n)$ is the union of two $\SO(2n)$-conjugacy classes if
 for all $i$, $\alpha_i$ and $m_\alpha(i)$ are even and $\epsilon(i)=0$. Otherwise
 $\CU_{\alpha,\epsilon}\subset\SO(2n)$ is a single $\SO(2n)$-conjugacy class.
Again the $\O(2n)$ orbits which split into two $\SO(2n)$ orbits do not arise
 in the image of Lusztig's map applied to elliptic elements, so we do not need
 to distinguish these two classes.

\subsubsection{The case $G=\GLdagger(n)$} Recall every characteristic for $\GL(n)$ is good, and the unipotent
classes for $\GL(n)$ are parametrized by partitions of $n$. Now we
consider $\GLdagger(n)$ ($n\ge 3$). Recall we write
$\GLdagger(n)=G_0\cup G_1$ where $G_0=\GL(n)$ and $G_1$ is the set of
non-singular bilinear forms.

\begin{lemma}
The unipotent conjugacy classes of $\GLdagger(n)$ which are contained in
$\GL(n)$ are in bijection with the unipotent conjugacy classes of $\GL(n)$.
\end{lemma}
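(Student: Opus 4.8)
The plan is to show that the natural map $\iota$ from the set of unipotent $\GL(n)$-conjugacy classes to the set of unipotent $\GLdagger(n)$-conjugacy classes contained in $\GL(n)$, sending the $\GL(n)$-conjugacy class of a unipotent element $g$ to the $\GLdagger(n)$-conjugacy class of $g$, is a bijection. Surjectivity is immediate: a $\GLdagger(n)$-conjugacy class $C$ contained in $\GL(n)$ and consisting of unipotent elements contains some element $g$, and the $\GL(n)$-conjugacy class of $g$ maps to $C$ under $\iota$.

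The real content is injectivity. Here I would use that $\GL(n)$ is normal of index $2$ in $\GLdagger(n)$, with quotient $\GLdagger(n)/\GL(n)\simeq\Ztwo$ represented by the symmetric bilinear form $\delta$ (unique up to $\GL(n)$-conjugacy). Thus every element of $\GLdagger(n)\setminus\GL(n)$ is of the form $h\delta$ with $h\in\GL(n)$, and conjugation by $h\delta$ acts on $\GL(n)$ as conjugation by $h$ followed by the automorphism $\theta\colon g\mapsto\delta g\delta\inv=\,^tg\inv$ (using $\delta^2=1$ and the relation $\delta g\delta=\,^tg\inv$ recorded above). Consequently two unipotent elements $g,g'\in\GL(n)$ are $\GLdagger(n)$-conjugate if and only if $g'$ is $\GL(n)$-conjugate either to $g$ or to $\theta(g)=\,^tg\inv$; so injectivity of $\iota$ reduces to the claim that $\theta$ fixes every unipotent $\GL(n)$-conjugacy class, i.e.\ that $\,^tg\inv$ is $\GL(n)$-conjugate to $g$ for every unipotent $g$.

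That claim is elementary over the algebraically closed field $\BF$: unipotent conjugacy classes in $\GL(n)$ are classified by the Jordan type, which is determined by the dimensions $\dim\ker(g-1)^k$. A matrix is always conjugate to its transpose, so $\,^tg$ has the same Jordan type as $g$; and for unipotent $g$ one has $(g\inv-1)^k=(-1)^k(g-1)^kg^{-k}$ with $g^{-k}$ invertible and commuting with $(g-1)^k$, so $\ker(g\inv-1)^k=\ker(g-1)^k$ for all $k$, whence $g\inv$ again has the same Jordan type as $g$. Therefore $\,^tg\inv$ has the Jordan type of $g$ and is $\GL(n)$-conjugate to it, which gives injectivity and hence the asserted bijection.

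The only step requiring any care is the bookkeeping in the middle paragraph: one must trace through the product rules defining $\GLdagger(V)$ to confirm that the nontrivial coset $G_1=\GL(n)\delta$ acts on $\GL(n)=G_0$ by conjugation precisely via $g\mapsto\,^tg\inv$. The transpose-invariance and inverse-invariance of unipotent classes, which drive the argument, are entirely standard and present no real obstacle.
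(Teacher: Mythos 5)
Your argument is correct: the paper states this lemma without proof (it is imported from Spaltenstein, Section I.2), and your reduction — the nontrivial coset acts on $\GL(n)$ by $g\mapsto h\,{}^tg^{-1}h^{-1}$, so the two candidate $\GL(n)$-orbits inside a $\GLdagger(n)$-class coincide because transposition and inversion both preserve the Jordan type of a unipotent element — is exactly the intended argument. Nothing to add.
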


In other words if $\CU\subset \GL(n)$ is a unipotent conjugacy class for $\GLdagger(n)$
then it is a single $\GL(n)$-orbit.

So consider the unipotent conjugacy classes of $\GLdagger(n)$ in $\GLdagger(n)\backslash \GL(n)$.

Define $\tPplus(n)$ to be the set of pairs $(\alpha,\epsilon)$,
where
\begin{enumerate}
\item $\alpha\in \CP_{1}(n)$ (i.e. even rows have even multiplicity);
\item $\epsilon:\Zg\rightarrow\{\omega,0,1\}$.
\end{enumerate}
The function $\epsilon$ is required to satisfy, for all $i\ge 0$:
$$
\epsilon(i)=
\begin{cases}
\omega,& \text{ if } i\text{ even};\\
\omega,& \text{ if } m_\alpha(i)=0;\\
1,& \text{ if } i\text{ odd}, m_\alpha(i)\text{ odd};\\
0\text{ or }1,& \text{ if } i\text{ odd},\, m_\alpha(i)>0\text{ even}.
\end{cases}
$$

\begin{proposition}
If $p=2$, the unipotent conjugacy classes of $\GLdagger(n)$ in $\GLdagger(n)\backslash \GL(n)$ are parametrized by $\tPplus(n)$.
\end{proposition}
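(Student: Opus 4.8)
The plan is to recast the proposition as a normal-form classification. Since the characteristic of $\BF$ is $2$, an element $\phi\in G_1$ is unipotent in $\GLdagger(n)$ exactly when $A:=\phi^2\in\GL(V)$ is unipotent: if $A$ is unipotent then $\phi$ has order a power of $2$, hence is unipotent, and the converse is clear. Recall $A$ is the \emph{asymmetry} of $\phi$, characterised by $\phi(v,w)=\phi(w,Av)$ for all $v,w\in V$. Thus a unipotent $\GL(V)$-conjugacy class in $D=G_1$ is the same thing as an isomorphism class of pairs $(V,\phi)$ with $\phi$ non-degenerate bilinear and $A$ unipotent, the group $\GL(V)$ acting by change of basis; the partition attached to such a class is $\alpha:=$ the Jordan type of $A$, a partition of $n=\dim V$.

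\textbf{Extracting invariants.} Set $t=A-1$, a nilpotent endomorphism making $V$ a finite-length $\BF[t]$-module; the identity $\phi(v,w)=\phi(w,v)+\phi(w,tv)$ expresses that $\phi$ is ``$t$-symmetric''. Following the classical analysis of bilinear forms (Riehm), which in this unipotent situation is essentially the content of \cite{Spa}*{Section I.2} — the same kind of machinery as in the symplectic case behind $\tPminus(n)$ — one attaches to $(V,\phi)$, for each $i\ge1$, a non-degenerate symmetric bilinear form $q_i$ on an $\BF$-vector space $K_i$ of dimension $m_\alpha(i)$ (a subquotient of $\ker t^i$), induced by $q_i(\bar x,\bar y)=\phi(t^{i-1}x,y)$. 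Two facts need to be checked here: that $q_i$ is well defined and non-degenerate, and that $q_i$ is \emph{alternating whenever $i$ is even}. The latter follows from the asymmetry relation together with $A$ being an isometry of $\phi$: these force the diagonal values $\phi(t^{i-1}x,x)$, for $x\in\ker t^i$, to vanish when $i$ is even. Since a non-degenerate alternating form exists only in even dimension, $m_\alpha(i)$ is then even for every even $i$, i.e.\ $\alpha\in\CP_1(n)$: the parity constraint defining $\tPplus(n)$ is forced, not imposed. One normalises $\epsilon(i)=1$ to mean ``$q_i$ is not alternating'', i.e.\ $\phi(t^{i-1}v,v)\ne0$ for some $v\in\ker t^i$, in parallel with the convention used for $Sp(2n)$.

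\textbf{Completeness and the enumeration.} The crux is that $\bigl(\alpha,(q_i)_{i\ge1}\bigr)$ is a \emph{complete} invariant of $(V,\phi)$: every admissible datum occurs, and two pairs are isomorphic if and only if their invariants agree. Realizability is by an explicit orthogonal construction from standard blocks of the prescribed sizes and types. Completeness is proved by induction on $\dim V$: one splits off a non-degenerate $A$-stable subspace realising one block of the smallest part of $\alpha$, so that $(V,\phi)$ becomes its orthogonal direct sum with a pair of strictly smaller $A$-structure, and then one applies the inductive hypothesis. I expect this to be the main obstacle: in characteristic $2$ Witt cancellation fails for bilinear forms, so the subspace split off must be chosen to match $q_i$ on the nose, and one must verify that the forms induced on the complement recover exactly the remaining data. (Equivalently, one simply quotes Riehm's classification of bilinear forms specialised to unipotent asymmetry.) Granting completeness, the classification of unipotent classes in $G_1$ reduces to enumerating, for a fixed $\alpha\in\CP_1(n)$, the possible tuples $(q_i)$. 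Over the algebraically closed field $\BF$ of characteristic $2$, a non-degenerate symmetric bilinear form of rank $r$ is, up to isometry, the unique alternating form if $r$ is even, or the unique non-alternating form (a sum of $r$ copies of $\langle1\rangle$) for every $r\ge1$, and there is nothing else. Hence: $q_i$ is forced when $i$ is even, so $\epsilon(i)=\omega$; there is nothing to record when $m_\alpha(i)=0$, so again $\epsilon(i)=\omega$; when $i$ is odd and $m_\alpha(i)$ is odd, $q_i$ must be non-alternating, so $\epsilon(i)=1$; and when $i$ is odd with $m_\alpha(i)$ even and positive, both isometry types occur, giving the free binary choice $\epsilon(i)\in\{0,1\}$. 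These are precisely the pairs making up $\tPplus(n)$, which completes the proof.
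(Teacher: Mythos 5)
Your outline is correct and is essentially the standard classification of non-degenerate bilinear forms with unipotent asymmetry (Wall/Riehm), which is exactly what the paper imports: the paper gives no proof of this proposition, citing Spaltenstein I.2 and only recording the definition of the parametrizing map via $S(\phi)=\phi^2$, the Jordan type $\alpha\in\CP_1(n)$, and the vanishing or not of $\phi(v,(g-1)^{i-1}v)$ on $\ker(g-1)^i$ — the same invariants you construct. The one substantive step you rightly flag but do not carry out, namely completeness of the invariant $(\alpha,(q_i))$ in the absence of Witt cancellation in characteristic $2$, is precisely the content of the cited reference, so your argument sits at the same level of rigor as the paper's own treatment.
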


We write $\CU_{\alpha,\epsilon}$ for the unipotent class parametrized by $(\alpha,\epsilon)$.

The map from unipotent classes in $G_1$ to parameters is defined as
follows.  Define the map $S:G_1\rightarrow G_0$ to be
$S(\phi)=\phi^2$. If $\phi$ is unipotent then so is $S(\phi)$, and
therefore $S(\phi)$ defines a partition $\alpha$ of $n$, and it is easy to see $\alpha\in\CP_1(n)$.
Suppose $i$ is odd and $m_\alpha(i)>0$ is even. Then define
$\epsilon(i)=0$ if $f(v,(g-1)^{i-1}v)=0$ for all $v\in \text{ker}(g-1)^i$, and $\epsilon(i)=1$ otherwise.

\subsection{Closure Relations}
\label{s:closure}

We now describe the closure relations on unipotent classes. As discussed
in Section \ref{s:good} if the characteristic is good then the closure
relations on classes are given by the order relation on partitions.
So assume $p=2$ and $G=\GLdagger$, $Sp(2n)$ or $O(2n)$.

We define a partial order on the sets
$\tPminus(2n)$ and $\tPplus(n)$ defined in the previous section.
Suppose $(\alpha,\epsilon)$ and $(\beta,\delta)$ are elements of one of these sets.
We say $(\alpha,\epsilon)\le (\beta,\delta)$ if

\begin{enumerate}
\item $\alpha\le \beta$ \quad $(\Leftrightarrow \alpha^*\ge \beta^*)$;
\end{enumerate}
and for all $k\ge 1$:
\begin{enumerate}
\item[(2)] $\left(\sum_{i=1}^k\beta_i^*\right)-\text{max}(\delta_k,0)\le\left(\sum_{i=1}^{k}\alpha_i^*\right)-\text{max}(\epsilon_k,0)$;
\item[(3)] If $\sum_{i=1}^k\alpha_i^*=\sum_{i=1}^k\beta_i^*$ and $\alpha^*_{k+1}-\beta^*_{k+1}$ is odd then $\delta_k\ne 0$.
\end{enumerate}

\begin{proposition}
  \label{p:order}
Suppose $p=2$ and $G=\GLdagger(n), Sp(2n)$ or $O(2n)$.
Suppose $(\alpha,\epsilon),(\beta,\delta)$ are  both in
$\tPplus(n), \tPminus(2n)$, or $\tPminus(2n)$, respectively.
In the case of $O(2n)$ assume they are both in $\tPminus(2n)_0$ or $\tPminus(2n)_1$.

Then
$\CU_{\alpha,\epsilon}\leu \CU_{\beta,\delta}$ if and
only if $(\alpha,\epsilon)\le (\beta,\delta)$.
\end{proposition}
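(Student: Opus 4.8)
The plan is to derive Proposition~\ref{p:order} from the classical analysis of degenerations of unipotent classes in classical groups in characteristic $2$, going back to Spaltenstein \cite{Spa}; the real content is to check that the numerical conditions (1)--(3) reproduce the known closure order. First I would carry out the obvious reductions. The bijective homomorphism $\Sp(2n)\to\SO(2n+1)$ recalled above identifies the closure orders for these groups, so that $\SO(2n+1)=\O(2n+1)$ needs no separate treatment and it suffices to treat $\GLdagger(n)$, $\Sp(2n)$ and $\O(2n)$. For $\GLdagger(n)$ the squaring map $S\colon\phi\mapsto\phi^2$ from $G_1$ to $\GL(n)$ is $\GL(n)$-equivariant and a morphism of varieties, so degenerations of forms map to degenerations of their squares, and the refined invariant $\epsilon$ on $\GLdagger(n)\setminus\GL(n)$ plays exactly the role it does for $\Sp$ and $\O$; the two cases then run in parallel. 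For $\O(2n)$ a degeneration cannot leave the connected component of the element, which explains the hypothesis that $(\alpha,\epsilon)$ and $(\beta,\delta)$ be taken in the same piece $\tPminus(2n)_0$ or $\tPminus(2n)_1$.

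For the direction $\CU_{\alpha,\epsilon}\leu\CU_{\beta,\delta}\Rightarrow(\alpha,\epsilon)\le(\beta,\delta)$ I would exhibit, for each of (1), (2), (3), a function on the variety of unipotent elements of $G$ (or of the component $G_1$) that is constant on $G^0$-classes, semicontinuous for the closure order, and whose value on $\CU_{\alpha,\epsilon}$ recovers the relevant side of the inequality. For (1) this is the standard fact that $g\mapsto\dim\ker(g-1)^k=\sum_{i\le k}\alpha_i^*$ is upper semicontinuous, which is precisely $\alpha\le\beta$. For (2) and (3) I would instead use the bilinear form $b_k(u,u')=\langle (g-1)^{k-1}u,u'\rangle$ induced on $\ker(g-1)^k$: the quantity $\sum_{i\le k}\alpha_i^*-\max(\epsilon_k,0)$ is the dimension of the radical of $b_k$ on a canonical subspace, and the clause (3) records the residue modulo $2$ of a subsidiary rank, which is constrained exactly when the coarser ranks of $(g-1)^k$ fail to drop strictly. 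Each of these is upper or lower semicontinuous by a rank argument, and together they give all the required inequalities.

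For the converse $(\alpha,\epsilon)\le(\beta,\delta)\Rightarrow\CU_{\alpha,\epsilon}\leu\CU_{\beta,\delta}$, I would use transitivity of $\leu$ to reduce to realising a generating set of elementary moves of the poset $(\tPminus(2n),\le)$, respectively $(\tPplus(n),\le)$, by explicit one-parameter degenerations. There are two kinds of elementary move: either (a) $\epsilon=\delta$ and $\alpha$ is obtained from $\beta$ by a single elementary dominance move on Young diagrams compatible with membership in $\CP_{\pm1}$; or (b) $\alpha=\beta$ and $\delta$ is obtained from $\epsilon$ by changing one value $\epsilon(i)=1$ into $\delta(i)=0$ on an even row of even multiplicity. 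For (b) I would write down a family of forms inside the fixed $\GL$-class of $\alpha$ interpolating between the two types; for (a) I would start from the usual rank-one degeneration in $\GL$ and modify it to respect the form, if necessary after first applying a move of type (b). It then remains to verify that these moves generate the order $\le$ and that each family genuinely lands in the appropriate closure, respecting the component decomposition in the $\O(2n)$ case.

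The hard part will be the parity condition (3), which has no counterpart in good characteristic: the closure order is not merely ``dominance together with comparison of a list of ranks'', because the form carries an Arf-type invariant that interacts with the Jordan structure exactly along chains of rows on which the partial sums $\sum_{i\le k}\alpha_i^*$ and $\sum_{i\le k}\beta_i^*$ coincide. Arranging the type-(a) degenerations so as to respect simultaneously the form and this modulo-$2$ constraint, and confirming that no further relations are needed beyond (1)--(3), is the delicate point, and is where I expect to lean most heavily on Spaltenstein's detailed computations.
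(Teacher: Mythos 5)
The paper does not actually prove Proposition~\ref{p:order}: Section~\ref{s:classical} is explicitly framed as a recollection of known facts, following Spaltenstein \cite{Spa}, and the proposition is quoted as part of that recollection (the closure order on unipotent classes of classical groups in characteristic $2$ is Spaltenstein's theorem; the proof environment that follows in the source belongs to the corollary about $\epsilonmax$, not to the proposition). So the honest comparison is: the paper cites, you sketch.

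As a sketch your plan identifies the right ingredients and is essentially the strategy of the original proof --- semicontinuity of invariants attached to $(g-1)^k$ and the form for the ``only if'' direction, explicit one-parameter degenerations realising elementary moves for the ``if'' direction, with the component constraint handling $\O(2n)$ and the squaring map transporting the $\GLdagger$ case. But as written it is not a proof. Concretely, three things are asserted rather than established: (i) that the quantity $\sum_{i\le k}\alpha_i^*-\max(\epsilon_k,0)$ is the dimension of a canonically defined subspace varying semicontinuously in $g$, and that condition (3) is likewise forced by a semicontinuous mod-$2$ invariant --- this is exactly the content of the proposition in one direction and you give no construction; (ii) that your elementary moves (a) and (b) generate the partial order $\le$ on $\tPminus(2n)$ resp.\ $\tPplus(n)$ --- a nontrivial combinatorial claim in the presence of condition (3), which couples the $\epsilon$-data to the partition data along chains where the partial sums agree; and (iii) that each elementary move is realised by an actual degeneration compatible with the form. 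You flag all three and defer them to ``Spaltenstein's detailed computations,'' which is a legitimate citation but means the proposal, taken on its own, has genuine gaps at precisely the points where the statement has content. If your intention is to cite \cite{Spa} for these, you should say so and drop the pretense of a proof; if you intend a self-contained argument, (i)--(iii) each need to be carried out.
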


\begin{corollary}
Fix a partition $\alpha$ of the appropriate type for $G$. Then the set $\{\CU_{\alpha,\epsilon}\}$ (as $\epsilon$ varies)
has a unique maximal element.
\end{corollary}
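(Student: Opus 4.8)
The plan is to transfer the assertion, via Proposition~\ref{p:order}, into a transparent combinatorial statement about the parameter sets $\tPminus(2n)$ and $\tPplus(n)$ of Section~\ref{s:closure}, and then to exhibit an explicit top element for the induced order on those parameters when $\alpha$ is held fixed. We are in the setting of Proposition~\ref{p:order}, so $p=2$ and $G=\GLdagger(n),Sp(2n)$, or $O(2n)$; the hypothesis that $\alpha$ is of the appropriate type for $G$ guarantees the set $\{\CU_{\alpha,\epsilon}\}$ is nonempty. Fixing $\alpha$ fixes $\alpha_1^*$, so in the $O(2n)$ case every pair $(\alpha,\epsilon)$ lies in $\tPminus(2n)_0$ (if $\alpha_1^*$ is even) or every such pair lies in $\tPminus(2n)_1$ (if $\alpha_1^*$ is odd). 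Hence any two of our classes satisfy the hypotheses of Proposition~\ref{p:order}, and $\CU_{\alpha,\epsilon}\leu\CU_{\alpha,\delta}$ if and only if $(\alpha,\epsilon)\le(\alpha,\delta)$.

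Next I would specialize the defining conditions (1)--(3) of $\le$ to the case $\beta=\alpha$. Condition (1) becomes $\alpha\le\alpha$, which holds. In condition (3) the quantity $\alpha^*_{k+1}-\beta^*_{k+1}$ equals $0$, which is even, so its hypothesis is never met and (3) is vacuous. After cancelling the common term $\sum_{i=1}^k\alpha_i^*$, condition (2) reduces to $\max(\epsilon_k,0)\le\max(\delta_k,0)$ for all $k\ge1$. Since the map $\max(\cdot,0)\colon\{\omega,0,1\}\to\{0,1\}$ takes the value $1$ exactly at $1$, this says precisely that $\epsilon_k=1$ forces $\delta_k=1$ for every $k$. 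Thus, for fixed $\alpha$, the order on $\{(\alpha,\epsilon)\}$ is simply inclusion of the supports $\{k:\epsilon_k=1\}$.

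Finally I would construct the maximum. By the definitions of $\tPminus(2n)$ and $\tPplus(n)$, for our fixed $\alpha$ there is a set $F$ of indices — the positive even $k$ with $m_\alpha(k)$ positive and even in the $\tPminus$ case, and the positive odd such $k$ in the $\tPplus$ case — together with a fixed assignment $\epsilon_\alpha$ on the complement of $F$ (determined by $\alpha$ alone), such that $\epsilon$ is admissible if and only if $\epsilon_k=(\epsilon_\alpha)_k$ for $k\notin F$ and $\epsilon_k\in\{0,1\}$ is arbitrary for $k\in F$. Let $\epsilonmax$ be the assignment that agrees with $\epsilon_\alpha$ off $F$ and is identically $1$ on $F$; it is admissible, since the listed constraints explicitly permit the value $1$ at each $k\in F$. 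For any admissible $\epsilon$ and any $k$: if $k\notin F$ then $\epsilon_k$ and $(\epsilonmax)_k$ both equal $(\epsilon_\alpha)_k$, while if $k\in F$ then $(\epsilonmax)_k=1$; in either case $\epsilon_k=1$ implies $(\epsilonmax)_k=1$. Hence $(\alpha,\epsilon)\le(\alpha,\epsilonmax)$ for every admissible $\epsilon$, so $\CU_{\alpha,\epsilonmax}$ is the largest element of $\{\CU_{\alpha,\epsilon}\}$ for $\leu$, and therefore its unique maximal element, since $\leu$ is a partial order.

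I do not expect a genuine obstacle here; the only points requiring care are bookkeeping: verifying that condition (3) truly drops out when $\beta=\alpha$, noting that fixing $\alpha$ keeps the $O(2n)$-classes inside a single $\tPminus(2n)_0$ or $\tPminus(2n)_1$ so that Proposition~\ref{p:order} applies to every pair among them, and confirming that $\epsilonmax$ satisfies all the constraints in the definitions of $\tPminus(2n)$ and $\tPplus(n)$.
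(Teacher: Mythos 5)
Your proposal is correct and takes essentially the same route as the paper: both define $\epsilonmax^\alpha$ by choosing the value $1$ at every index where the parametrization allows a choice, and deduce maximality from Proposition~\ref{p:order}. You simply spell out the specialization of conditions (1)--(3) to the case $\beta=\alpha$ (and the point that fixing $\alpha$ keeps all classes in a single $\tPminus(2n)_0$ or $\tPminus(2n)_1$ for $O(2n)$), which the paper compresses into ``the result is immediate.''
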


\begin{proof}
Recall $\epsilon(i)\in\{\omega,0,1\}$, and is determined by $\alpha$
except in some cases when it can be $0$ or $1$; for $\epsilonmax$
always choose $1$ whenever there is a choice.

Explicitly, define:
\[
  \epsilonmax^\alpha(i)=
  \begin{cases}
    1, & \text{ if } \quad i>0\text{ odd},\, m_\alpha(i)>0\text{ even and }\alpha\in\tPplus(n);\\
    1, & \text{ if }  \quad i>0\text{ even},\, m_\alpha(i)>0\text{ even and } \alpha\in\tPminus(2n).
  \end{cases}
\]
This determines $\epsilonmax^\alpha$ completely, and the result is immediate from Proposition \ref{p:order}.
\end{proof}

\section{Lusztig's map for classical groups}\label{3-cl}
\label{s:Lclassical}
\subsection{The function $\psi$}
We recall a function defined in  \cite[\S 1.6]{L1}.

Suppose $\alpha=(\alpha_1,\dots,\alpha_\ell)$ is a partition. Define a function
$$\psi_\alpha:\{i\mid 1\le i\le \ell\}\rightarrow\{-1,0,1\}$$  as follows.
Set $\alpha_0=\alpha_{\ell+1}=0$.

\begin{enumerate}

\item If $i$ is  odd and $\alpha_{i-1}>\alpha_{i}$ then $\psi(i)=1$;
\item If $i$ is even and $\alpha_i>\alpha_{i+1}$ then $\psi(i)=-1$;
\item In all other cases $\psi(i)=0$.
\end{enumerate}

This satisfies some obvious properties.

For $\ell\in\BZ$ define $\kappa_\ell\in\{0,1\}$ by $(-1)^\ell=(-1)^{\kappa_\ell}$. Then

\begin{enumerate}
	\setcounter{enumi}{3}
\item $\psi_\alpha(1)=1$; $\psi_\alpha(\ell)=-1$ if $\ell$ is even;
\item If $k\le\ell$ is odd then $\sum_{i=1}^k\psi_\alpha(i)=1$;
\item If $k\le\ell$ is even then $\sum_{i=1}^k\psi_\alpha(i)=1+\psi_\alpha(k)$;
\item $\sum_{i=1}^\ell\psi_\alpha(i)=\kappa_\ell$.
\end{enumerate}

Suppose $\alpha$ is a partition of $n$, and all entries $\alpha_i$ of  $\alpha$ are even. Then let
$$
\alpha+\psi_\alpha=(\alpha_1+\psi_\alpha(1),\dots, \alpha_\ell+\psi_\alpha(\ell)).
$$
This is a partition of $n+\kappa_\ell$. Explicitly,
if we write $\alpha+\psi_\alpha=(\alpha_1',\dots,\alpha_\ell')$, and
if $2i+1\le\ell$, then
$$
(\alpha'_{2i},\alpha'_{2i+1})=
\begin{cases}
(\alpha_{2i},\alpha_{2i+1}),& \text{ if } \alpha_{2i}=\alpha_{2i+1};\\
(\alpha_{2i}-1,\alpha_{2i+1}+1),& \text{ if } \alpha_{2i}>\alpha_{2i+1}.
\end{cases}
$$
Also $\alpha'_1=\alpha_1+1$ and, if $\ell$ is even, $\alpha'_\ell=\alpha_\ell-1$.

\subsection{Parametrization of elliptic conjugacy classes}
\label{s:ellipticparam}
For classical groups the elliptic conjugacy classes in $W$ are
parametrized as follows.  Recall $\CP(n)$ is the set of partitions of
$n$,  $\CP(n)_0$ is the subset of $\CP(n)$ consisting of partitions
with an even number of parts, and $\CP(n)^{\text{odd}}$ is the set of
partitions of $n$ consisting of only odd parts.

\begin{enumerate}
	\item $G=\GLdagger(n)$. Then $G$ has two connected components: $G^0=GL(n)$ and $D=G\backslash G^0$. We have
	\begin{itemize}
		\item $[W^{G^0}_e]$ is a singleton, and the only element is the conjugacy class of Coxeter elements;
		\item $\WDec$ is parametrized by $\CP^{odd}(n)$.
	\end{itemize}
	\item $G=SO(2n+1)$ or $Sp(2n)$: $[W_e]$ is parametrized by $\CP(n)$.
	\item $G=O(2n)$. Then $G$ has two connected components: $G^0=SO(2n)$ and $D=G\backslash G^0$.

          \begin{itemize}
\item            $[W_e]$ is parametrized by $\CP(n)$;
		\item $[W^{G^0}_e]$ is parametrized by $\CP(n)_0$;
		\item $\WDec$ is parametrized by $\CP(n)_1$.
	\end{itemize}
\end{enumerate}
For $\alpha\in\CP(n)$, we write $\CC_\alpha$ for the corresponding elliptic conjugacy class in $W$.

\subsection{Explicit description of $\Phi$ for classical groups}\label{explicit}
Suppose $G$ is a classical group. Lusztig's map $\Phi_e: \WDec \to [D_u]$ is described explicitly in
\cite[\S 4.2]{L1} and \cite[\S 3.7 \& \S 5.5]{L3}.

\begin{enumerate}
\item $G=\GLdagger(n)$. In this case, $\Phi_e$ sends the conjugacy class of Coxeter elements to the principal unipotent class. Note that both the conjugacy class of Coxeter elements and the principal unipotent class correspond to the partition $(n)$ of $n$.

If $p=2$, then $D$ contains unipotent elements. In this case, the map $\Phi_e: \WDec \to [D_u]$ is given by $\CC_\alpha\mapsto \CU_{\alpha,\epsilonmax^\alpha}$ for $\a \in \CP^{odd}(n)$.

\item $G=O(2n+1)$. The elliptic conjugacy classes of $W$ are parametrized by $\CP(n)$.

\begin{itemize}
\item[(i)] $p \neq 2$: the map $\Phi_e$ is given by $\CC_{\alpha} \mapsto \CU_{\alpha'}$, where
  $$
  \alpha'=\begin{cases}
2\alpha+\psi_{\alpha},    & \text{ if $\alpha$ has an odd number of parts };
\\ (2\alpha+\psi_{\alpha}, 1), & \text{ if $\alpha$ has an even number of parts}.
\end{cases}
$$
\item[(ii)] $p=2$: the map $\Phi_e$ is given by $\CC_{\alpha} \mapsto \CU_{\alpha', \epsilonmax^{\alpha'}}$
  where $\alpha'=(2\alpha,1)$.
\end{itemize}

\item $G=Sp(2n)$. The elliptic conjugacy classes of $W$ are parametrized by $\CP(n)$.
\begin{itemize}
\item[(i)]  $p \neq 2$: the map $\Phi_e$ is given by $\CC_{\alpha} \mapsto \CU_{2 \alpha}$.
\item[(ii)]  $p=2$: the map $\Phi_e$ is given by $\CC_{\alpha} \mapsto \CU_{(2 \alpha,\epsilonmax^{2\alpha})}$.
\end{itemize}

\item $G=O(2n)$.

If $p \neq 2$, then the unipotent elements of $G$ are contained in
$G^0=SO(2n)$. In this case, $[W^{G^0}]$ is parametrized by
$\CP(n)_0$. The map $\Phi_e: [W^{G^0}_e] \to [G^0_u]$ is given by
$\CC_{\alpha} \mapsto \CU_{2 \alpha+\psi_{ \alpha}}$.

If $p=2$, then $[W_e]$ is parametrized by $\CP(n)$ and the map
$\Phi_e: [W_e] \to [G_u]$ is given by
$\CC_{\alpha}\mapsto \CU_{2\alpha,\epsilonmax^{2\alpha}}$.
On the other hand $\WDec$ is parametrized by $\CP(n)\backslash \CP(n)_0$,
and the map $\Phi_e:\WDec\rightarrow \Du$ is $\CC_\alpha\mapsto\CU_{2\alpha,\epsilonmax^{2\alpha}}$.

Note that each unipotent class $\CU_{2\alpha,\epsilonmax^{2\alpha}}$ is a  single $G^0$-conjugacy classes.
\end{enumerate}

\subsection{Map from characteristic $0$ to characteristic $2$}
For the moment let $G_p$ be a connected reductive group, defined over an
algebraically closed field $\BF$ of characteristic $p>0$. Let $G_0$
be the complex group with the same root datum as $G$.  Consider the
sets $[G_{p, u}]$ and  $[G_{0, u}]$) of unipotent conjugacy classes of $G_p$
and $G_0$, respectively.

\begin{proposition}{\cite{Spa}*{Theorem III.5.2}}
\label{p:p}
  There is an injective, dimension preserving map $\pi_p:[G_{0, u}]\rightarrow [G_{p, u}]$, such that $\pi_p$ is an isomorphism of partially ordered
  sets from $[G_{0, u}]$ to its image.
\end{proposition}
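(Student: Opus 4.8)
The assertion is Spaltenstein's \cite{Spa}*{Theorem III.5.2}; I outline how I would organise a proof of it.

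\emph{Construction of $\pi_p$ and its formal properties.} The plan is to realise $G_0$ and $G_p$ as the base changes $\CG_{\BC}$ and $\CG_{\overline{\BF_p}}$ of a split reductive group scheme $\CG$ over $\BZ[1/N]$ ($p\nmid N$) with the given root datum. The unipotent variety is then a closed subscheme $\CV\subset\CG$, flat over $\Spec\BZ[1/N]$ of relative dimension $\dim G-\rank G$, and for each unipotent class $C$ of $G_0$ its Zariski closure $\overline{C}$ is cut out, after enlarging $N$, by equations over $\BZ[1/N]$; write $\overline{C}_{\BZ}$ for the resulting closed subscheme, which I may take flat over the base. The content one must establish is that the special fibre $\overline{C}_{\BZ}\otimes\overline{\BF_p}$ is irreducible of dimension $\dim C$; granting this, I define $\pi_p(C)$ to be its dense $G_p$-orbit. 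Flatness immediately gives $\dim\pi_p(C)=\dim\overline{C}=\dim C$, and if $C\leu C'$ then $\overline{C}\subset\overline{C'}$ forces $\overline{\pi_p(C)}\subset\overline{\pi_p(C')}$, i.e.\ $\pi_p(C)\leu\pi_p(C')$; so dimension-preservation and one direction of order-compatibility are automatic once $\pi_p$ is well defined.

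\emph{Reduction to the combinatorial core.} By the reduction to almost simple $G^0$ already carried out in this paper (following \cite{L3}) it suffices to treat the connected classical groups $\GL(n)$, $\Sp(2n)$, $\SO(n)$ and the exceptional groups. For $\GL(n)$ every characteristic is good and $\pi_p$ is the identity on partitions. For $\Sp(2n)$ and $\SO(n)$ the only bad prime is $p=2$; here I would combine the explicit parametrisations of unipotent classes --- by partitions with parity constraints in good characteristic (Section~\ref{s:good}) and by pairs $(\alpha,\epsilon)$ in characteristic $2$ (Section~\ref{s:bad}) --- with the closure orders (dominance of partitions and Proposition~\ref{p:order}) to write $\pi_2$ down explicitly between these parameter sets, and then check injectivity, dimension-preservation, and the poset-embedding property by a finite combinatorial verification. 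For the exceptional groups the three properties are read off from the known tables of unipotent classes, their dimensions and their closure relations in arbitrary characteristic.

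\emph{The main obstacle.} The formal part above is genuinely soft; the substance is twofold. First there is the geometric input that $\overline{C}$ has irreducible reduction modulo $2$ of the expected dimension: this is needed even to make sense of $\pi_2$, and it rests on Spaltenstein's explicit study of unipotent class closures for classical groups (closely tied to the question of their normality). Second there is the converse implication $\pi_p(C)\leu\pi_p(C')\Rightarrow C\leu C'$ together with injectivity --- equivalently the poset-embedding property --- which for $\Sp(2n)$ and $\SO(n)$ in characteristic $2$ amounts to showing that the explicit combinatorial map reflects the order, via the closure criterion of Section~\ref{s:closure} and Proposition~\ref{p:order}; this is finite but not mechanical. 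The exceptional-group verification is routine bookkeeping. Since all of this is carried out in \cite{Spa}, in practice I would simply invoke \cite{Spa}*{Theorem III.5.2}.
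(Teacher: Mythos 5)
The paper offers no proof of this proposition: it is quoted verbatim from Spaltenstein, \cite{Spa}*{Theorem III.5.2}, and your proposal likewise ends by invoking exactly that reference, so you are taking the same route as the paper. Your surrounding sketch of the spreading-out construction and of where the real content lies (irreducibility of the special fibre of $\overline{C}$ and the order-reflecting property) is a reasonable gloss on Spaltenstein's argument and does not conflict with anything the paper does.
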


Lusztig gives an alternative description of this map in terms of the Springer correspondence \cite{L1}*{\S 4.1},
and shows 
in \cite[Theorem 0.4 and \S3.9]{L2} that the following diagram is commutative:
\[
\xymatrix{& [W] \ar[ld]_{\Phi_0} \ar[rd]^{\Phi_p} & \\ [G_{0, u}] \ar@{^{(}->}[rr]^{\pi_p} & & [G_{p, u}].}
\]

We now assume $G$ is a classical group.
In this case 
$\pi_p$ is a bijection for $p\neq 2$. In the table below, we list the sets parametrizing the objects in types $B,C,D$, over $\C$ and in characteristic $2$.
\[
\begin{tabular}{lll}
  $G$ & $\C$ & $\Fbartwo$\\\hline
  $Sp(2n)$ & $\CP_{-1}(2n)$ & $\tPminus(2n)$ \\
  $O(2n+1)$ & $\CP_{1}(2n+1)$ & $\tPminus(2n)$\\
  $O(2n)$ & $\CP_{1}(2n)$ & $\tPminus(2n)$\\
\end{tabular}
\]

We have the following diagram
$$
\xymatrix{
  [G_{0,u}]\ar@/^/[r]^{\pi_2}& [G_{2,u}]\ar@/^/[l]^{\theta_2},
}
$$
where $\theta_2\circ\pi_2=\text{id}$.
Both $\pi_2,\theta_2$ are defined explicitly
in \cite{Spa}*{III, \S6-8}.

Implicit in the statement that Spaltenstein's and Lusztig's definitions of
$\pi_2$ agree is the following result.
This makes the relationship between Lusztig's function $\psi_\alpha$ and
Spaltenstein's map $\theta_2$ precise in the case of classical groups.
%\remind{maybe modify English}

\begin{lemma}
Suppose $G$ is of type $B,C$ or $D$, and $\CU\in\Phi_2\Wec$.
  Write $\CU=\CU_{(\alpha,\epsilonmax^{\alpha})}$ as in Section \ref{explicit}. Then
$$
\theta_2(\CU_{(\alpha,\epsilonmax^\alpha)})=
\begin{cases}
\CU_{\alpha}&\text{type }C  \\  
\CU_{\alpha+\psi_\alpha}&\text{types}\ B,D 
\end{cases}
$$
Thus the map $\theta_2$, which is inverse to $\pi_2$, has a simple description when restricted to the image of the elliptic elements.
%\remind{the last sentence is not easy to understand}
\end{lemma}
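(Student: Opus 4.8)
The statement is essentially a consequence of the commutativity already recorded above together with the explicit formulas of Section~\ref{explicit}. Since $\theta_2\circ\pi_2=\mathrm{id}$ and, by Lusztig's theorem (the commutative triangle relating $\Phi_0$, $\Phi_2$ and $\pi_2$), $\Phi_2=\pi_2\circ\Phi_0$, we obtain
\[
\theta_2\circ\Phi_2=\theta_2\circ\pi_2\circ\Phi_0=\Phi_0
\]
as maps from $[W]$ to $[G_{0,u}]$. In particular, for an elliptic class $\CC_\beta$ we get $\theta_2\bigl(\Phi_2(\CC_\beta)\bigr)=\Phi_0(\CC_\beta)$. Thus the lemma reduces to comparing the right-hand side — computed by the characteristic-$0$ formulas of Section~\ref{explicit} — with the claimed expression $\CU_\alpha$ (type $C$) or $\CU_{\alpha+\psi_\alpha}$ (types $B$, $D$), where $(\alpha,\epsilonmax^\alpha)=\Phi_2(\CC_\beta)$ is read off from the characteristic-$2$ formulas of the same section.

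\textbf{Type $C$.} For $G=Sp(2n)$, Section~\ref{explicit}(3) gives $\Phi_2(\CC_\beta)=\CU_{(2\beta,\epsilonmax^{2\beta})}$ and $\Phi_0(\CC_\beta)=\CU_{2\beta}$. As $\beta$ ranges over $\CP(n)$, the partition $\alpha:=2\beta$ ranges over exactly the partitions of $2n$ with every part even, which are precisely those for which $\CU_{(\alpha,\epsilonmax^\alpha)}$ lies in $\Phi_2\Wec$. Hence the displayed identity reads $\theta_2(\CU_{(\alpha,\epsilonmax^\alpha)})=\CU_\alpha$, which is the assertion in type $C$.

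\textbf{Types $B$ and $D$.} For $G=O(2n+1)$ (resp.\ $O(2n)$), Section~\ref{explicit}(2),(4) gives $\Phi_2(\CC_\beta)=\CU_{(2\beta,\epsilonmax^{2\beta})}$ — in type $B$ via the characteristic-$2$ bijection between unipotent classes of $SO(2n+1)$ and of $Sp(2n)$ — while $\Phi_0(\CC_\beta)=\CU_{2\beta+\psi_\beta}$ (in type $B$, with a trailing $1$ adjoined when $\beta$ has an even number of parts). Setting $\alpha:=2\beta$, the desired identity $\theta_2(\CU_{(\alpha,\epsilonmax^\alpha)})=\CU_{\alpha+\psi_\alpha}$ amounts to $\alpha+\psi_\alpha=2\beta+\psi_\beta$, i.e.\ to the combinatorial fact $\psi_{2\beta}=\psi_\beta$, which is immediate from the definition of $\psi$ since the conditions ``$\alpha_{i-1}>\alpha_i$'' and ``$\alpha_i>\alpha_{i+1}$'' are unaffected by the rescaling $\alpha\mapsto 2\alpha$. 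In type $B$ one also invokes property (7) of $\psi$, namely $\sum_{i=1}^{\ell}\psi_\beta(i)=\kappa_\ell$, to see that $2\beta+\psi_\beta$ is a partition of $2n+1$ precisely when $\ell$ is odd; when $\ell$ is even the adjoined $1$ appearing in $\Phi_0(\CC_\beta)$ is carried through the same identification, matching the case division in the characteristic-$0$ formula.

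\textbf{The main obstacle.} The difficulty here is organizational rather than analytic: one must keep straight the several parametrizations in play — the elliptic partition of $n$, the characteristic-$2$ partition of $2n$ (via the $Sp(2n)$-model in type $B$), and the characteristic-$0$ partition of $2n$ or $2n+1$ — and, in type $B$, reconcile the trailing $1$ that appears in $\Phi_0(\CC_\beta)$ for $\beta$ with an even number of parts. Once the conventions of Section~\ref{explicit} are pinned down, each case is a direct substitution into $\theta_2\circ\Phi_2=\Phi_0$, and the only genuine computation, $\psi_{2\beta}=\psi_\beta$, is a one-line consequence of the definition of $\psi$.
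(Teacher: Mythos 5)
Your derivation is formally clean, but it takes a genuinely different route from the paper, and in doing so it assumes the one point the lemma is actually there to settle. You deduce everything from the identity $\theta_2\circ\Phi_2=\theta_2\circ\pi_2\circ\Phi_0=\Phi_0$. For that chain to be valid, the $\pi_2$ appearing in Lusztig's commutative triangle (defined in \cite{L1}*{\S 4.1} via the Springer correspondence, with commutativity proved in \cite{L2}) must be the same map as the $\pi_2$ of Spaltenstein for which $\theta_2$ is the explicit retraction of \cite{Spa}*{III, \S 6--8}. The paper flags exactly this issue in the sentence immediately preceding the lemma (``Implicit in the statement that Spaltenstein's and Lusztig's definitions of $\pi_2$ agree is the following result''): the lemma is the authors' way of making that agreement precise on the image of the elliptic classes, and it feeds into the reduction to characteristic $2$ used in Proposition \ref{p:Phi_elliptic}. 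Taking the agreement as given and then deriving the lemma from it is therefore close to circular within the paper's architecture. Concretely, the substantive content of the lemma is the computation you never carry out: unwinding Spaltenstein's description of $\theta_2$ (for type $C$ this is \cite{Spa}*{III, 6.1}, which immediately gives $\theta_2(\CU_{\alpha,\epsilonmax^\alpha})=\CU_\alpha$; for types $B$, $D$ it is \cite{Spa}*{III, Lemma 7.3}, which modifies the \emph{transpose} partition $\alpha^*$ column by column according to the parities of $\alpha_i^*$ and the values $\epsilonmax^\alpha(i)$) and matching the result against Lusztig's \emph{row-wise} recipe $\alpha\mapsto\alpha+\psi_\alpha$. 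The paper does this by exhibiting explicit bijections $i\mapsto j=\alpha_i^*+1$ (odd case) and $i\mapsto j=\alpha_i^*$ (even case) between the columns that Spaltenstein increments or decrements and the rows where $\psi_\alpha=\pm 1$. Your observation that $\psi_{2\beta}=\psi_\beta$ is correct but only relabels the answer; it does not touch $\theta_2$ itself. If you want to keep your softer argument, you must either cite a precise reference identifying Lusztig's and Spaltenstein's maps, or supply the column-versus-row comparison above — which is the paper's proof.
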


\begin{proof}
This is immediate in the case of $\Sp(2n)$: by \cite{Spa}*{III, 6.1} $\theta_2(\CU_{\alpha,\epsilonmax^\alpha})=\CU_\alpha$.
%\remind{Is it true that $\psi=0$ in this case? Is it how $\psi$ is defined?}

Suppose $G=\SO(2n)$ and $\beta\in\CP(n)$. Then $\Phi(\CC_\beta)=\CU_{(2\beta,\epsilonmax^{2\beta})}$.
So suppose $\alpha\in\CP(2n)$ has all even parts.

Define $\delta$ by \cite{Spa}*{III, Lemma 7.3}, so that 
$$
\theta_2(\CU_{(\alpha,\epsilonmax^\alpha)})=\CU_{\delta(\alpha)}
$$
Set $\gamma=\alpha+\psi_\alpha$, so we  have to show $\delta=\gamma$.

According to  \cite{Spa}*{Lemma 7.3}, and using
the definition of $\epsilonmax^\alpha$, we see $\epsilonmax^\alpha(i)=1$ if and only if
there is a row of length $i$, i.e. $m_\alpha(i)>0$.
Given this we see
\begin{subequations}
\renewcommand{\theequation}{\theparentequation)(\alph{equation}}
\begin{align}
\delta_i^*=\alpha_i^*+1, & \quad\text{if $i$ is odd, $\alpha_i^*$ is even, $m_\alpha(i-1)>0$};\\
\delta_i^*=\alpha_i^*-1, & \quad\text{if $i$ is even, $\alpha_i^*$ is even, $m_\alpha(i)>0$};\\
\delta_i^*=\alpha_i, & \quad\text{otherwise}.
\end{align}
\end{subequations}
On the other hand
\begin{subequations}
\renewcommand{\theequation}{\theparentequation)(\alph{equation}}
\begin{align}
\gamma_j=\alpha_j+1, & \quad\text{if $j$ is odd, $\alpha_{j-1}>\alpha_j$};\\
\gamma_j=\alpha_j-1, & \quad\text{if $j$ is even, $\alpha_j>\alpha_{j+1}$};\\
\gamma_j=\alpha_j, & \quad\text{otherwise}.
\end{align}
\end{subequations}
There is a bijection:
$$
\{i\text{ odd}\mid \alpha_i^*\text{ is even},\, m_\alpha(i-1)> 0\}\leftrightarrow\{j\text{ odd}\mid \alpha_{j-1}>\alpha_j\}.
$$
The bijection takes $i\mapsto j=\alpha^*_i+1$, with inverse $j\mapsto i=\alpha_j+1$.
Under this bijection incrementing $\alpha_i^*$ by $1$ corresponding to incrementing $\alpha_j$ by $1$.
Similarly there is a bijection
$$
\{i\text{ even}\mid \alpha_i^*\text{ is even},\, m_\alpha(i)> 0\}\leftrightarrow\{j\text{ even}\mid \alpha_{j}>\alpha_{j+1}\},
$$
taking $i\mapsto j=\alpha_i^*$ and $j\mapsto i=\alpha_j$,
under which decreasing $\alpha_i$ by $1$ corresponds to decreasing $\alpha_j^*$ by $1$.
This completes the proof for $\SO(2n)$. We leave the very similar case of $\SO(2n+1)$ to the reader.
\end{proof}

\begin{example}
  Suppose $G=\SO(2n)$ and $\alpha=[6,6,4,2]$.
  Then $\alpha^*=[4,4,3,3,2,2]$. Thus $\delta_1^*=\alpha_1^*=4, \delta_2^*=\alpha_2^*-1=3,\delta_3^*=\alpha_3^*=3,
\delta_4^*=\alpha_4^*=3, \delta_5^*=\alpha_5^*+1=3,\delta_6^*=\alpha_6^*-1=1,
\delta_7^*=\alpha_7^*+1=1$. This gives the partition $\delta^*=[4,3,3,3,3,1,1]$, or $\delta=[7,5,5,1]$.

On the other hand
$\gamma_1=\alpha_1+1=7,\gamma_2=\alpha_2-1=5,\gamma_3=\alpha_3+1=5,\gamma_4=\alpha_4-1=1$,
so $\gamma=[7,5,5,1]=\delta$.

\end{example}

Returning to Proposition \ref{p:p}, this gives a reduction of Theorem \ref{main} in the classical case
to characteristic $2$, in which case the
explicit description of $\Phi_2$ restricted to $[W_e]$ is quite simple.
We have the following result.

\begin{proposition}
  \label{p:Phi_elliptic}
  Let $G$ be a connected classical group defined over an algebraically
  closed field of characteristic $p \ge 0$.  Let
  $\CC_\a, \CC_\b\in\Wec$, parametrized by certain partitions as in
  Section \ref{s:ellipticparam}.  Then
  $\Phi(\CC_\a) \leu \Phi(\CC_\b)$ if and only if $\a \le \b$.

  The same statement holds for $\GLdagger(n)$ and $\O(n)$, with  $\CC_\alpha,\CC_\beta\in\WDec$ and $\Phi_e:\WDec\mapsto\Du$.
\end{proposition}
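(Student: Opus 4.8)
The plan is to reduce everything to characteristic $2$, where Section~\ref{explicit} gives $\Phi_e$ completely explicitly, and then to read the statement off the combinatorial description of the closure order in Proposition~\ref{p:order}. For the connected groups $\GL(n)$, $\Sp(2n)$, $\SO(2n+1)$, $\SO(2n)$ the reduction is formal: the case $\GL(n)$ is trivial since $\Wec$ consists of a single class, and in the other cases the commutative square of \cite{L2} recalled above gives $\Phi_p(\CC)=\pi_p(\Phi_0(\CC))$, while by Proposition~\ref{p:p} the map $\pi_p$ is an isomorphism of posets onto its image. Hence for \emph{every} $p$ one has $\Phi_p(\CC_\alpha)\leu\Phi_p(\CC_\beta)\Leftrightarrow\Phi_0(\CC_\alpha)\leu\Phi_0(\CC_\beta)$, so it is enough to prove $\Phi_2(\CC_\alpha)\leu\Phi_2(\CC_\beta)\Leftrightarrow\alpha\le\beta$; the statement for all $p$ then follows. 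The remaining groups, $\GLdagger(n)$ and $\O(2n)$ with $\WDec\to\Du$, carry unipotent elements in the relevant component only when $p=2$, and I would handle them directly in characteristic $2$ by the same computation.

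\emph{The characteristic $2$ statement.} By Section~\ref{explicit}, when $p=2$ we have $\Phi_e(\CC_\alpha)=\CU_{\gamma(\alpha),\,\epsilonmax^{\gamma(\alpha)}}$, where $\gamma(\alpha)=2\alpha$ for $\Sp(2n)$ and for $\O(2n)$, where $\gamma(\alpha)=\alpha$ for $\GLdagger(n)$, and where $\gamma(\alpha)=2\alpha$ again for $\SO(2n+1)$ once one uses the bijective homomorphism $\Sp(2n)\to\SO(2n+1)$, which induces a bijection of unipotent classes compatible with the closure orders. In each case $\alpha$ and $\beta$ are partitions of the same integer, $\gamma$ is an order embedding (indeed $2\alpha\le2\beta\Leftrightarrow\alpha\le\beta$), and the pair $\gamma(\alpha),\gamma(\beta)$ lies in one of the subsets to which Proposition~\ref{p:order} applies: for $\Sp(2n)$ and $\SO(2n)$ the parameter sets $\CP(n)$, $\CP(n)_0$ map into $\tPminus(2n)$, resp.\ $\tPminus(2n)_0$, using $(2\alpha)_1^*=\ell(\alpha)$; for $\O(2n)$ and $\GLdagger(n)$ the sets $\CP(n)_1$, $\CP^{\mathrm{odd}}(n)$ parametrizing $\WDec$ map into $\tPminus(2n)_1$, resp.\ $\tPplus(n)$. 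Thus, by Proposition~\ref{p:order}, $\Phi_e(\CC_\alpha)\leu\Phi_e(\CC_\beta)$ if and only if conditions (1)--(3) there hold with $\epsilon=\epsilonmax^{\gamma(\alpha)}$ and $\delta=\epsilonmax^{\gamma(\beta)}$; here condition (1) is precisely $\gamma(\alpha)\le\gamma(\beta)$, which is equivalent to $\alpha\le\beta$.

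\emph{Conditions (2) and (3) are automatic for maximal parameters.} This is the only substantive point. Write $\gamma=\gamma(\alpha)$, $\gamma'=\gamma(\beta)$ and $T_k(\mu)=\sum_{i=1}^k\mu_i^*$, so that $\gamma\le\gamma'$ means $T_k(\gamma)\ge T_k(\gamma')$ for all $k$. Condition (3) holds vacuously: from the definition of $\epsilonmax$ one has $\epsilonmax^{\gamma'}(k)\in\{\omega,1\}$ for every $k\ge1$, so $\delta_k\ne0$ always. For condition (2), observe that every part of $\gamma$ and of $\gamma'$ has the parity for which $\epsilonmax$ is relevant --- even for $\Sp$, $\O(2n)$, $\SO(2n+1)$, odd for $\GLdagger(n)$ --- so the definition of $\epsilonmax$ gives $\max(\epsilonmax^{\gamma}(k),0)=1$ exactly when $m_\gamma(k)>0$, and likewise for $\gamma'$. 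If $T_k(\gamma)>T_k(\gamma')$, condition (2) is immediate because both $\max$-terms lie in $\{0,1\}$. If $T_k(\gamma)=T_k(\gamma')$, comparing with the partial sums at $k-1$ and $k+1$ yields $\gamma_k^*\le(\gamma')_k^*$ and $\gamma_{k+1}^*\ge(\gamma')_{k+1}^*$; hence if $m_\gamma(k)>0$, i.e.\ $\gamma_k^*>\gamma_{k+1}^*$, then $(\gamma')_k^*=(\gamma')_{k+1}^*$ is impossible, since it would give $\gamma_{k+1}^*\ge(\gamma')_{k+1}^*=(\gamma')_k^*\ge\gamma_k^*>\gamma_{k+1}^*$; so $m_{\gamma'}(k)>0$ and both $\max$-terms equal $1$, while if $m_\gamma(k)=0$ condition (2) is trivial. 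So (1)--(3) all follow from $\alpha\le\beta$; conversely they are necessary by Proposition~\ref{p:order}. Therefore $\Phi_e(\CC_\alpha)\leu\Phi_e(\CC_\beta)\Leftrightarrow\alpha\le\beta$, and by the reduction this holds in all characteristics.

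\emph{Main obstacle.} The reduction to characteristic $2$ is painless given Proposition~\ref{p:p} and the diagram of \cite{L2}, and the identification of $\Phi_2$ on elliptic classes is provided by Section~\ref{explicit}. The real work is the combinatorial step in the third paragraph --- checking that conditions (2) and (3) of Proposition~\ref{p:order} become automatic for the maximal parameters --- together with the routine but not entirely weightless bookkeeping of confirming, case by case ($\Sp(2n)$, $\SO(2n+1)$, $\SO(2n)$, $\O(2n)$, $\GLdagger(n)$, in characteristic $\ne2$ and $=2$), that the hypotheses of Proposition~\ref{p:order} hold and that $\gamma$ is an order embedding. I expect the subtlest point to be the case $T_k(\gamma)=T_k(\gamma')$ of condition (2), where the precise form of $\epsilonmax^{\gamma'}$ must be played off against the transpose inequalities coming from $\gamma\le\gamma'$.
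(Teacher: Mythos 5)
Your proposal is correct and follows essentially the same route as the paper: reduce to characteristic $2$ via Proposition \ref{p:p} and the commutative diagram from \cite{L2}, then read the claim off the explicit description of $\Phi_2$ in Section \ref{explicit} together with the closure order of Proposition \ref{p:order}. Your third paragraph explicitly verifies that conditions (2) and (3) of Proposition \ref{p:order} are automatic for the maximal parameters $\epsilonmax$ — a check the paper compresses into a one-line appeal to Section \ref{s:closure} — and that verification is correct.
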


\begin{proof}
If $p \neq 2$, then $[G_{p, u}]$ and $[G_{0, u}]$ are the same as partially ordered sets.
Moreover, the map $\pi_2: [G_{0, u}] \to [G_{2,u}]$ is an
isomorphism from the poset $[G_{0, u}]$ to its image. In other
words, $\Phi_p(\CC_\a) \leu \Phi_p(\CC_\b)$ if and only if
$\Phi_2(\CC_\a) \leu \Phi_2(\CC_\b)$, where $\Phi_p$ is Lusztig's
map for $G$ in characteristic $p$.

	Now by \S \ref{explicit}, for $G=Sp(2n)$ or $O(n)$, the map $\Phi_2$ is given by $\CC_{\alpha} \mapsto \CU_{\alpha', \epsilonmax^{\alpha'}}$
	where $\alpha'=2 \a$ or $(2\alpha,1)$. By \S \ref{s:closure}, $\CU_{\alpha', \epsilonmax^{\alpha'}} \le \CU_{\b', \epsilonmax^{\b'}}$ if and only if $\a' \le \b'$, which is equivalent to $\a \le \b$.

        In the twisted cases $\GLdagger(n)$ and $\O(2n)$ we only have to consider the case $p=2$, in which case $\Phi_2$ is given
        by $\alpha\mapsto (\alpha, \e^{\a}_{\max})$ or $(2\alpha,\epsilonmax^
        {2\alpha})$ respectively, and the same argument applies.
      \end{proof}

Using this Theorem \ref{t:main} for classical groups 
is  equivalent to the following
combinatorial statement purely  about the Weyl group.

\begin{proposition}
  \label{p:order_reversing}
  Suppose $W$ is a classical Weyl group.
  Suppose $\CC_\alpha,\CC_\beta\in \WDec$, for partitions $\alpha,\beta$ as in Section \ref{s:ellipticparam}.
  Then
\begin{equation}
\label{e:order_rreversing}
\CC_\alpha\leW\CC_\beta\Leftrightarrow \beta\le\alpha.
\end{equation}
\end{proposition}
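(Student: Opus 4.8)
The plan is to reduce the statement to an explicit computation with the Bruhat order, using the concrete minimal-length representatives of elliptic conjugacy classes constructed in \cite{He07}. We treat types $B/C$ and $D$ together, as the $D$-case differs only by a parity restriction on the partition. By definition $\CC_\alpha \leW \CC_\beta$ means that there exist $w_\alpha \in (\CC_\alpha)_{\min}$ and $w_\beta \in (\CC_\beta)_{\min}$ with $w_\beta \le w_\alpha$ in the Bruhat order, equivalently (by \cite[Corollary 4.5]{He07}) that for every minimal-length $w_\alpha$ there is $w_\beta \in \CC_\beta$ with $w_\beta \le w_\alpha$.

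For the direction ``$\beta \le \alpha \Rightarrow \CC_\alpha \leW \CC_\beta$'' I would produce the two representatives explicitly. An elliptic conjugacy class in the Weyl group of type $B_n$ corresponding to a partition $\alpha = (\alpha_1, \dots, \alpha_\ell)$ of $n$ has a standard minimal-length representative given as a product of ``negative cycles'' of sizes $\alpha_1, \dots, \alpha_\ell$ placed on consecutive blocks of coordinates $\{1,\dots,\alpha_1\}, \{\alpha_1+1,\dots,\alpha_1+\alpha_2\}, \dots$ (in one-line/signed-permutation notation). When $\beta \le \alpha$ in the dominance order, $\beta$ is obtained from $\alpha$ by a sequence of elementary moves each of which merges the bottom of one column into the next (equivalently, moves a box down in the Young diagram); I would check that each such elementary move replaces the corresponding product of negative cycles by one that is $\ge$ it in the Bruhat order, either directly on signed permutations or via the tableau criterion of \cite{bb}. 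Transitivity of $\le$ then gives $w_\beta \le w_\alpha$ for suitable choices.

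The harder direction ``$\CC_\alpha \leW \CC_\beta \Rightarrow \beta \le \alpha$'' is the main obstacle, precisely because a given elliptic class has many minimal-length elements (their number grows with $n$), so one cannot simply inspect a single representative. Here I would use the explicit Bruhat-order criterion for classical Weyl groups from \cite{bb}: for $w$ in type $B_n$ and $-n \le i,j \le n$ one has the rank-type statistics $w[i,j]$, and $w \le w'$ iff $w[i,j] \le w'[i,j]$ for all $i,j$. Fix an arbitrary $w_\alpha \in (\CC_\alpha)_{\min}$; the relation $\CC_\alpha \leW \CC_\beta$ forces the existence of some $w_\beta \in \CC_\beta$ (not necessarily minimal length, using condition (3) of the partial order) with $w_\beta \le w_\alpha$. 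The key point — and the technical heart of the argument — is that although there are $\sim 4n^2$ inequalities $w_\beta[i,j] \le w_\alpha[i,j]$, the \emph{class} of $w_\beta$ already constrains certain of these statistics: for the structured representatives one computes that, along the $\ell = \ell(\beta)$ ``diagonal'' positions $i = \sum_{k\le m}\beta_k$ (the right endpoints of the cycle blocks), the quantity $w_\beta[-i,i]$ (or an analogous rank statistic) is determined by the partition $\beta$ and records exactly the partial sums $\sum_{k\le m}\beta_k$; similarly $w_\alpha$ at these positions is bounded by the partial sums of $\alpha$. Comparing these $\ell$ inequalities then yields $\sum_{k \le m}\beta_k \le \sum_{k\le m}\alpha_k$ for all $m$, i.e. $\beta \le \alpha$. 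The delicate part is verifying that these particular rank statistics are genuinely class invariants (independent of which minimal-length representative one picks) and extracting the partial-sum inequalities cleanly; this is carried out in Section \ref{s:classical_proof}, with the cases $C_n$, $D_n$, and twisted $D_n$ handled by the same method modulo the bookkeeping of signs and the parity constraint defining $\WDec$.
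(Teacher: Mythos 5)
Your overall strategy coincides with the paper's: explicit minimal-length representatives compared via the subword property for one implication, and the rank-statistic description of the Bruhat order from \cite{bb} combined with inequalities that hold on an entire conjugacy class for the other. However, you have unpacked the relation $\leW$ backwards. In the paper's convention, $\CC'\leW\CC$ means that below any minimal-length element of $\CC$ one can find an element of $\CC'$; so $\CC_\alpha\leW\CC_\beta$ supplies some $w\in\CC_\alpha$ with $w\le w_\beta$, not some $w_\beta\in\CC_\beta$ with $w_\beta\le w_\alpha$ as you assert at the outset. Since $\leW$ is not symmetric, carrying your reading through consistently would establish $\CC_\alpha\leW\CC_\beta\Leftrightarrow\alpha\le\beta$, which is the opposite of \eqref{e:order_rreversing}; indeed, for the explicit representatives one has $w_\alpha\le w_\beta$ precisely when $\alpha\ge\beta$, so your ``elementary move'' verification in the easy direction would produce the inequality pointing the wrong way. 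Getting this right is not cosmetic: the content of the theorem is the direction of the order reversal.

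In the hard direction the same confusion causes you to attach the exact computation and the bound to the wrong elements, and this changes what actually needs to be proved. The auxiliary element produced by condition (3) is an \emph{arbitrary} element $w$ of $\CC_\alpha$, so none of its rank statistics are ``determined by the partition''; what is needed, and what the paper proves as Proposition \ref{B-ineq} (resp.\ \ref{A-ineq}), is a one-sided inequality such as $w[n-m,n-m+1]\ge\min\{k:\alpha_1+\cdots+\alpha_k\ge m\}$ valid for \emph{every} element of the class, deduced from the cycle structure of the signed permutation. The exact value $w_\beta[n-m,n-m+1]=k$ at $m=\beta_1+\cdots+\beta_k$ is computed only for the single explicit minimal-length representative $w_\beta$. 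Consequently the ``delicate part'' you identify --- invariance of the statistics across minimal-length representatives --- is not the right target; the needed statement is a uniform inequality over the whole conjugacy class, minimal length playing no role. Finally, you omit the case $\GLdagger(n)$ (twisted type $A$), where $\WDec$ is parametrized by partitions with only odd parts and the analogue of the class inequality (Proposition \ref{A-ineq}) requires a genuinely different argument: one shows that a certain subset of each $w\delta$-orbit is stable under $(w\delta)^2$ and uses the oddness of the orbit sizes to upgrade this to $w\delta$-stability. This case is not covered by ``the same method modulo bookkeeping of signs.''
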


\begin{proof}[Proof of Theorem \ref{t:main} for classical groups]
Then by Propositions \ref{p:Phi_elliptic} and \ref{p:order_reversing},
  $$
  \Phi(\CC_\alpha)\leu\Phi(\CC_\beta)\Leftrightarrow \alpha\le\beta\Leftrightarrow \CC_\beta\leW\CC_\alpha,
  $$
  which implies Theorem \ref{main} for classical groups.
\end{proof}      

We prove Proposition \ref{p:order_reversing} in Section \ref{s:classical_proof}.

\section{Examples}

We explicitly describe the maps from $\Wc$ to $[G(\C)_u]$ and $[G(\overline \BF_2)_u]$ for some classical groups of small rank. 

The elliptic conjugacy classes in $W$ are parametrized by partitions
as discussed above. General conjugacy classes in $W$ are parametrized by a
pair $(L,\CC_L)$ where $L$ is a Levi factor and $\CC_L$ is an elliptic
conjugacy class in $L$ \cite{geck_pfeiffer}.  When $L$ is of type $A$
it only has one elliptic conjugacy class so we drop it from the notation.

In each table the elliptic classes are listed first, followed by a
line, and then the non-elliptic classes.

The unipotent orbits in $G(\C)$ are parametrized by certain
partitions, and the unipotent orbits in $G(\Fbartwo)$ are
parametrized by pairs, as described in Section \ref{s:classical}.

\subsection{$\Sp(4)$}

Here $[W]\overset{\Phi_2}\longrightarrow[Sp(4,\Fbartwo)_u]$ is a bijection (both sets have $5$ elements),
whereas $[\Sp(4,\C)_u]$ has $4$ elements.
\bigskip\hfil

\begin{tabular}{lll}
  $[W]$ & $Sp(4,\C)$ & $\Sp(4,\overline\BF_2)$\\
  \hline\hline
  $[2]$ & $[4]$& $([4],*)$ \\
  $[1,1]$ & $[2,2]$& $([2,2],\epsilon(2)=1)$ \\
  \hline
  $A^s_1$ & $[2,2]$ &$([2,2], \epsilon(2)=0)$ \\
  $A^l_1$ & $[2,1,1]$& $([2,1,1], *)$\\
  $T$ & $[1,1,1,1]$ & $([1,1,1,1],*)$
\end{tabular}
\begin{comment}

\bigskip

$\O(4)$

The unipotent orbits in $\O(4,\overline \BF_2)$ are parametrized
by $\tPminus(4)$.

The Weyl group of $\O(4)$ is of type $C_2$ which has five conjugacy
classes, two of which are elliptic. We borrow the parametrization of
these conjugacy classes from $W(C_2)$.

\begin{tabular}{llll}
  $[W]$ & $O(4,\C)$ & $\O(4,\overline\BF_2)$& $\SO/\O$\\
    \hline\hline
  $[2]$ & $[3,1]$& $([4], *)$ & $\O$\\
  $[1,1]$ & $[2,2]$& $([2,2], \epsilon(2)=1)$ &$\SO$\\
  \hline
   $(A^s_1,*)$ & $[2,2]$ &$([2,2], \epsilon(2)=0)$&$\SO$\\
  $(A^l_1,*)$ & $[2,1,1]$& $([2,1,1],*)$&$\O$\\
  $(T,*)$ & $[1,1,1,1]$ & $([1,1,1,1],*)$ & $\SO$
\end{tabular}

\bigskip
\end{comment}

\subsection{$\SO(4)$}

\hfil

The unipotent orbits in $\SO(4,\overline \BF_2)$ are parametrized
by $\tPminus(4)_0$ (odd rows have even multiplicity, the first column has even length).
Also a strongly even partitions count twice provided $\epsilon(i)=0$ for all $i$;
these are denote with a subscript $I$ or $II$.

There is one elliptic class in $W$ and $4$ overall. In this case all three sets are in bijection.

\begin{tabular}{lll}
  $[W]$ & $SO(4,\C)$ & $\SO(4,\overline\BF_2)$\\
  \hline\hline
  $[1,1]$ & $[3,1]$& $([2,2], \epsilon(2)=1)$ \\
    \hline
  $(A_1,*)$ & $[2,2]_I$ &$([2,2], \epsilon(2)=0)_I$ \\

  $(A'_1,*)$ & $[2,2]_{II}$& $([2,2], \epsilon(2)=0)_{II}$ \\
  $(T,*)$ & $[1,1,1,1]$ & $([1,1,1,1],*)$
\end{tabular}

\subsection{$\SO(6)$}

\hfil

The Weyl group of $\SO(6)$ is of type $D_3$ which has $5$ conjugacy
classes, $1$ of which is elliptic.

\begin{tabular}{lll}
  $[W]$ & $SO(6,\C)$ & $\SO(6,\overline\BF_2)$\\
    \hline\hline
  $[2,1]$ & $[5,1]$& $([4,2], *)$\\
  \hline
  $(A_2,*)$ & $[3,3]$ & $([3,3],*)$\\
  $(D_2,*)$ & $[3,1,1,1]$& $([2,2,1,1],\epsilon(2)=1)$\\
  $(A_1,*)$ &  $[2,2,1,1]$& $([2,2,1,1],\epsilon(2)=0)$\\
  $(T,*)$ &  $ [1,1,1,1,1,1]$& $([1,1,1,1,1,1],*)$\\
\end{tabular}

\subsection{$\SO(8)$}

\hfil

The Weyl group of $\SO(8)$ is of type $D_3$, which has $13$ conjugacy
classes, $3$ of which are elliptic.
Both $\SO(8,\C)$ and $\SO(8,\Fbartwo)$ have $12$ unipotent classes.
Note that the orbits $[3,2,2,1]$ and $([2,2,2,2],\epsilon(2)=1)$ both occur twice in the image.

Note that $SO(8)$ has two non-conjugate $GL(4)=A_3$ Levi factors
in addition to $GL(1)\times SO(6)$ of type $D_3$.
Also it has
two non-conjugate $\GL(2)\times \GL(2)$ factors.

Also a unipotent orbit for $\O(8,\Fbartwo)$ splits into two
for $\SO(8,\Fbartwo)$ if and only if all parts are even, with even multiplicity, and all $\epsilon(i)=0$.

\begin{tabular}{lll}
$[W]$ & $SO(8,\C)$ & $\SO(8,\overline\BF_2)$\\
$[3,1]$ & $[7,1]$& $([6,2], *)$\\
$[2,2]$ & $[5,3]$& $([4,4], \epsilon(4)=1)$\\
$[1,1,1,1]$ & $[3,2,2,1]$& $([2,2,2,2], \epsilon(2)=1)$\\
  \hline
$D_3=SO(6)\times GL(1)$ & $[5,1,1,1]$& $[4,2,1,1]$\\
$A_3=GL(4)$ & $[4,4]_I$ & $([4,4],\epsilon(4)=0)_I$\\
$A'_3=GL(4)'$ & $[4,4]_{II}$ &$([4,4],\epsilon(4)=0)_{II}$\\
$A_1\times D_2=GL(2)\times SO(4)$ & $[3,2,2,1]$ & $([2,2,2,2],\epsilon(2)=1)$\\
$D_2=\GL(1)\times \SO(4)$ & $[3,1,1,1,1,1]$ & $([2,2,1,1,1,1],\epsilon(2)=1)$\\
$A_2=\GL(3)\times \GL(1)$&$[3,3,1,1]$ & $([3,3,1,1],*)$\\
$2A_1=\GL(2)\times \GL(2)$&$[2,2,2,2]_I$ & $([2,2,2,2],\epsilon(2)=0)_I$\\
$2A_1=\GL(2)\times \GL(2)'$&$[2,2,2,2]_{II}$& $([2,2,2,2],\epsilon(2)=0)_{II}$\\
$A_1=\GL(2)\times\GL(1)\times\GL(1)$&$[2,2,1,1,1,1]$ & $([2,2,1,1,1,1,1,1],\epsilon(2)=0)$\\
$T$ & $[1^8]$ & $([1^8],*)$
\end{tabular}

\subsection{$\SO(12)$}

\hfil

We only discuss the elliptic classes.

\begin{tabular}{llll}
$[W]$ & $SO(12,\C)$ & $\SO(12,\overline\BF_2)$\\
$[5,1]$ & $[11,1]$ & $([10,2],*)$\\
$[4,2]$ & $[9,3]$ & $([8,4],*)$\\
$[3,3]$ & $[7,5]$ & $([6,6],\epsilon(6)=1))$\\
$[2,2,1,1]$ & $[5,3,3,1]$ & $([4,4,2,2],\epsilon(4)=\epsilon(2)=1)$\\
$[1,1,1,1,1,1]$ & $[3,2,2,2,2,1]$ & $([2,2,2,2,2,2],\epsilon(2)=1)$
\end{tabular}

\section{Proof of the Main Theorem  for classical groups}
\label{s:classical_proof}
We start by giving a  version of the Bruhat order for classical groups which is convenient for
our purposes.

If $G=GL(n)$, then $W\simeq S_n$. For $w\in W$ and $1\le i,j\le n$, we define
$$
w\{i, j\}=\{1 \le k \le i\mid w(k) \ge j\}, \quad w[i,j]=|w\{i, j\}|. 
$$

If $G=SO(2n+1)$ or $Sp(2n)$ or $O(2n)$, then $W\cong S_n\ltimes(\BZ/2\BZ)^n$. We identify $W$ with the set of permutations $\s$ of $\{\pm 1, \pm 2, \ldots, \pm n\}$ satisfying $\s(-i)=-\s(i)$ for all
$i$. For $-n \le i, j \le n$, we define
$$
w\{i, j\}=\{-n \le k \le i, w(k) \ge j\}, \quad w[i,j]=|w\{i, j\}|. 
$$

We use the following labeling on the Dynkin diagrams. 

\begin{gather*}
B_n/C_n: \begin{dynkinDiagram}[backwards,arrows=false,root radius=.12cm,edge length=-1.0cm,labels={n,n-1,n-2,2,1}]{B}{o3.oo}
\end{dynkinDiagram} \\
D_n: \begin{dynkinDiagram}[backwards,arrows=false,root radius=.12cm,edge length=-1.0cm,labels={n,n-1,n-2,,1,2}]{D}{o3.ooo}\end{dynkinDiagram}
\end{gather*}

\begin{comment}
Finally suppose $W\cong S_n\ltimes(\BZ/2\BZ)^{n-1}$ is of type $D_n$, and
identify $W$ with the set of permutations $\s$ of
$\{\pm 1, \pm 2, \ldots, \pm n\}$ satisfying $\s(-i)=-\s(i)$ for all
$i$, and $|\{i\mid i\le-1, \sigma(i)\ge 1\}|$ is even. Define $w[i,j]$ as in type $B/C$.
\end{comment}

\begin{proposition}
  \label{p:bruhat}
  Suppose $W$ is simple and classical, and $x,y\in W$.

\begin{enumerate}
\item   If $G=GL(n)$, then $x\le y\Leftrightarrow x[i,j]\le y[i,j]$ for all $1\le i,j\le n$;
\item   If $G=SO(2n+1)$ or $Sp(2n)$,  then $x\le y\Leftrightarrow x[i,j]\le y[i,j]$ for all $-n\le i,j\le n$;
\item   If $G=O(2n)$, then $x\le y\Rightarrow x[i,j]\le y[i,j]$ for all $-n\le i,j\le n$.
\end{enumerate}
\end{proposition}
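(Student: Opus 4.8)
The plan is to prove Proposition~\ref{p:bruhat} by reducing the combinatorial description of the Bruhat order for each classical type to the well-known ``entry table'' characterization (the ``rank matrix'' criterion, e.g.\ \cite{bb}). For type $A$, the statement is the classical criterion: for $x,y\in S_n$ one has $x\le y$ in the Bruhat order if and only if, for every $i,j$, the number of $k\le i$ with $x(k)\ge j$ is at most the corresponding number for $y$; this is standard and I would just cite it, translating the usual ``$r_x(i,j)\ge r_y(i,j)$ with $r_w(i,j)=|\{k\le i: w(k)\le j\}|$'' into the form $w[i,j]=|\{k\le i : w(k)\ge j\}|$ used here by the substitution $j\mapsto n+1-j$. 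For types $B$ and $C$, I would invoke the analogous criterion for signed permutations: embedding the hyperoctahedral group $W(B_n)=W(C_n)$ into $S_{2n}$ via the permutations $\sigma$ of $\{\pm1,\dots,\pm n\}$ with $\sigma(-i)=-\sigma(i)$, the Bruhat order restricts to the Bruhat order of $S_{2n}$ on this subset (this is the content of the corresponding result in \cite{bb}, building on Deodhar), so the entry-table criterion on $\{-n\le i,j\le n\}$ follows immediately. Since $B_n$ and $C_n$ have the same Weyl group and the same length function, one statement covers both.

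The one genuinely asymmetric case is $D_n$, where only the implication $x\le y\Rightarrow x[i,j]\le y[i,j]$ is claimed (and is all that is needed later). Here the point is that $W(D_n)$ sits inside $W(B_n)$ as an index-two subgroup (the even signed permutations), but the Bruhat order on $W(D_n)$ is \emph{not} simply the restriction of the Bruhat order on $W(B_n)$ — it is coarser in one direction and finer in the other in general. What remains true, and is what I would prove, is the single implication: if $x\le y$ in the Bruhat order of $W(D_n)$ then $x\le y$ in the Bruhat order of $W(B_n)$, whence the entry inequalities follow from part~(2). This can be seen at the level of covering relations: a cover in $W(D_n)$ is given by multiplication by a reflection $t$ with $\ell(xt)=\ell(x)+1$, and every reflection of $W(D_n)$ is a reflection of $W(B_n)$; since the $D_n$-length and $B_n$-length functions agree on $W(D_n)$, such a step is also a (possibly non-covering, but order-increasing) step in $W(B_n)$. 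Chaining covers along a saturated chain from $x$ to $y$ in $W(D_n)$ then gives $x\le y$ in $W(B_n)$. Alternatively one can cite the known fact that the inclusion $W(D_n)\hookrightarrow W(B_n)$ is a morphism of posets for the respective Bruhat orders.

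Concretely I would organize the proof as: (i) recall/cite the rank-matrix criterion for $S_m$; (ii) deduce (1) by the index substitution $j\mapsto n+1-j$, checking that $w[i,j]$ as defined here equals the shifted rank-matrix entry and that monotonicity is preserved; (iii) deduce (2) from the embedding $W(B_n)\hookrightarrow S_{2n}$ together with the theorem that the Bruhat order on signed permutations is the restriction of the Bruhat order on $S_{2n}$, after matching the indexing conventions (here the index set $\{-n,\dots,-1,1,\dots,n\}$ is used in place of $\{1,\dots,2n\}$, and one checks that the $\sigma(-i)=-\sigma(i)$ symmetry makes the $4n^2$ inequalities well-defined); (iv) prove (3) by the covering-relation argument above, reducing $D_n$ to $B_n$ and then applying (2). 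The main obstacle is bookkeeping with the indexing conventions: the paper uses ``$w(k)\ge j$'' rather than the more common ``$w(k)\le j$'', and the index range $-n\le i,j\le n$ rather than $1\le i,j\le 2n$, so the bulk of the work is verifying that these reindexings carry the standard criteria to the stated form without sign errors — mathematically routine, but the one place where care is genuinely required. The $D_n$ direction is conceptually the subtlest point, but since only one implication is asserted it does not require the full (and more delicate) combinatorial characterization of the $D_n$ Bruhat order.
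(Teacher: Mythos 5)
Your parts (1) and (2) follow essentially the paper's route: both are quoted from \cite{bb} (Theorems 2.1.5 and 8.1.8), and your embedding of the signed permutations into $S_{2n}$ is the paper's alternative argument (via Lusztig's result that the Bruhat order on the fixed-point subgroup of a diagram automorphism is compatible with that of the ambient group); the reindexing you describe is indeed routine. Your plan for (3) --- reduce type $D$ to type $B/C$ by showing the inclusion of posets is order-preserving and then apply (2) --- is also the paper's plan, but the justification you give rests on a false claim: the $D_n$- and $B_n$-length functions do \emph{not} agree on the even signed permutations. For instance $u$ with $u(1)=-2$, $u(2)=-1$, $u(k)=k$ for $k\ge 3$ is a simple reflection of $W(D_n)$, so $\ell_D(u)=1$, while $\ell_B(u)=3$. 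So ``a $D_n$-cover is an order-increasing step in $W(B_n)$'' does not follow from agreement of lengths. The correct short repair, which is exactly the paper's lemma, is via roots: if $y=xs_\alpha$ with $\alpha$ a positive root of $SO(2n)$, then $x<^D y$ if and only if $x(\alpha)$ is a positive root of $SO(2n)$; since the positive roots of $SO(2n)$ may be identified with a subset of the positive roots of $Sp(2n)$, $x(\alpha)$ is then also positive for $Sp(2n)$, hence $\ell_C(xs_\alpha)>\ell_C(x)$ and $x<^C y$; chaining along a saturated chain gives the implication. Your covering-relation skeleton survives, but the stated reason must be replaced by this positivity argument.

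A second genuine omission: you only treat $x,y\in W(D_n)$, the even signed permutations, whereas in part (3) the group $W$ attached to $G=O(2n)$ is the \emph{extended} Weyl group $S_n\ltimes(\BZ/2\BZ)^n$, and the statement is needed precisely for elements of the non-identity component: the minimal-length representatives $w'_\alpha$ of elliptic classes with $\alpha_1^*$ odd end in $\delta$ and are odd signed permutations. The same root-positivity argument works coset by coset (steps inside $W^0\delta$ are again of the form $x\mapsto xs_\alpha$ with $\alpha$ a positive root of $SO(2n)$, as $\delta$ has length $0$), and this is how the paper states and proves its lemma; as written, your argument does not cover this case. Finally, a small inaccuracy in your side remark: the $D_n$ Bruhat order is strictly coarser than the restriction of the $B_n$ order (e.g.\ the two fork simple reflections of $D_2$ are incomparable in $D_2$ but comparable in $B_2$); it is not ``finer in one direction and coarser in the other''.
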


\begin{remark}
It is worth pointing out that although the Weyl group of $O(2n)$
is isomorphic to the Weyl group of $SO(2n+1)$ and $Sp(2n)$, the
simple reflections and thus the Bruhat order are different. In
particular, the explicit description of the Bruhat order for the
Weyl group of $O(2n)$ is more complicated than that of $SO(2n+1)$ or
$Sp(2n)$ and we only use a weak version  of the Bruhat order for
the Weyl group of $O(2n)$ here. The readers interested in the full
explicit description may refer to \cite{bb}.
\end{remark}

Part (1) is proved in \cite[Theorem 2.1.5]{bb}. Part (2) is proved in \cite[Theorem 8.1.8]{bb}. One may also argue as follows. In \cite{Lu-uneq}, Lusztig showed that the subgroup of a Coxeter group which is the fixed points of  the action of a diagram automorphism is again a Coxeter group and the Bruhat order of these two groups are compatible. Thus part (2) may be deduced directly part (1).  As to part (3), if $x, y \in W(SO(2n))$, then the statement follows from \cite[Theorem 8.2.8]{bb}. The general case follows from part (2) and the following Lemma. 

\begin{lemma}
We identify the (extended) Weyl group of $O(2n)$ with the Weyl group of $Sp(2n)$ as the abstract group $W=S_n \ltimes (\BZ/2 \BZ)^n$ in the natural way. We denote by $\le^D$ and $\le^C$ the Bruhat order for $W(O(2n))$ and $W(Sp(2n))$ respectively. Let $x, y \in W$. If $x \le^D y$, then $x \le^B y$. 
\end{lemma}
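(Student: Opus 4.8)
The plan is to reduce the lemma to the explicit $[i,j]$-criterion for the $\Sp(2n)$-order in Proposition~\ref{p:bruhat}(2) together with the combinatorial description of the type-$D_n$ Bruhat order in \cite[Theorem 8.2.8]{bb}: I will show that $x\le^D y$ implies $x[i,j]\le y[i,j]$ for all $-n\le i,j\le n$, and then conclude $x\le^C y$ by Proposition~\ref{p:bruhat}(2). First I record the structure of $W(\O(2n))$: the subgroup $W^0=W(\SO(2n))$, consisting of the signed permutations with an even number of sign changes, is a Coxeter group of type $D_n$ and has index $2$ in $W=S_n\ltimes(\BZ/2\BZ)^n$; the unique element $\e_D$ of $\ell$-length $0$ in the nontrivial coset is, after renumbering, the sign change $s$ of the last coordinate, and this $s$ is simultaneously a simple reflection for the $C_n$-structure (it normalizes the Borel of $\SO(2n)$, hence acts on $W^0$ by the order-$2$ diagram automorphism of $D_n$). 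By the definition of $\le^D$, the relation $x\le^D y$ forces $x,y$ into the same coset; writing $x=u$, $y=v$ with $u,v\in W^0$ when they lie in $W^0$, and $x=u\e_D$, $y=v\e_D$ with $u,v\in W^0$ otherwise, one has $x\le^D y\iff u\le^D v$ inside the type-$D_n$ Coxeter group $W^0$. In the first case \cite[Theorem 8.2.8]{bb} applied to $u\le^D v$ gives $u[i,j]\le v[i,j]$ for all $i,j$ directly, and we are done.

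The substantive case is $x=u\e_D$, $y=v\e_D$ with $u\le^D v$, where the goal is $u\e_D\le^C v\e_D$. I would first localize the difficulty by the lifting property: since $u\e_D=us$ and $v\e_D=vs$ with $s$ a $C_n$-simple reflection, and since $u\le^D v$ already yields $u\le^C v$ by the previous paragraph, the lifting property reduces $u\e_D\le^C v\e_D$ to $u\le^C v$ in every configuration of the right descents of $u\e_D,v\e_D$ except the single case in which $s$ is a right descent of $u\e_D$ but a right ascent of $v\e_D$ — that is, except when $u(n)>0$ and $v(n)<0$. In that remaining case I pass to $[i,j]$'s: since $u\e_D$ and $u$ have the same multiset of values, $(u\e_D)[n,j]=u[n,j]$, while for $-n\le i\le n-1$ the identity $(u\e_D)(k)=u(\e_D(k))$, with $\e_D$ swapping the values $n$ and $-n$, gives $(u\e_D)[i,j]=u[i,j]+\d_u(j)$, where $\d_u(j)=1$ if $1-u(n)\le j\le u(n)$ and $\d_u(j)=0$ otherwise; symmetrically $(v\e_D)[i,j]=v[i,j]-\d_v(j)$, with $\d_v(j)=1$ if $1+v(n)\le j\le -v(n)$ and $\d_v(j)=0$ otherwise. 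Hence $(u\e_D)[i,j]\le(v\e_D)[i,j]$ for all $i,j$ becomes equivalent to the reinforced inequalities
\[
v[i,j]-u[i,j]\ \ge\ \d_u(j)+\d_v(j),
\]
needed only at the finitely many $(i,j)$ for which $\d_u(j)+\d_v(j)\ge 1$.

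The main obstacle is precisely establishing these reinforced inequalities from $u\le^D v$. The bare type-$B_n$ order only supplies $v[i,j]-u[i,j]\ge 0$; the extra strength — that $v[i,j]-u[i,j]\ge 1$ (resp.\ $\ge 2$) at the finitely many columns $j$ governed by $u(n)$ and $|v(n)|$ — is exactly what the additional clauses of \cite[Theorem 8.2.8]{bb}, the ones that make the $D_n$-order strictly finer than the $B_n$-order, provide for a pair comparable in the $D_n$-order. I would carry this out by unwinding the $D_n$-criterion at those columns and their neighbours; the bookkeeping is somewhat delicate (the $D_n$-subtlety already appears in low rank, e.g.\ for $\SO(8)$), but it is purely combinatorial and uses no further input. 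Granting it, Proposition~\ref{p:bruhat}(2) yields $u\e_D\le^C v\e_D$, hence $x\le^C y$, which also completes the proof of Proposition~\ref{p:bruhat}(3).
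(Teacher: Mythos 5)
Your argument takes a completely different route from the paper's, and it has a genuine gap at exactly the point where all the content of the lemma is concentrated. After the (correct) reduction via the lifting property, the whole statement in the non-identity component comes down to the case $u\epsilon_D>u$, $v\epsilon_D<v$, where you must upgrade the bare inequalities $v[i,j]-u[i,j]\ge 0$ to $v[i,j]-u[i,j]\ge \delta_u(j)+\delta_v(j)$. You assert that this follows from the extra clauses of \cite[Theorem 8.2.8]{bb} but then write ``Granting it\dots'' and stop. That derivation is not routine bookkeeping: the supplementary conditions in the type-$D$ criterion are stated only at pairs $(i,j)$ where certain equalities and parity conditions hold, and extracting from them a uniform strict gap of size $1$ or $2$ over the whole range of columns $1-u(n)\le j\le u(n)$ and $1+v(n)\le j\le -v(n)$ (and all rows $i\le n-1$) is precisely the step that would need to be proved. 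As it stands the hard case is assumed, not established, so the proof is incomplete.

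You should also be aware that this machinery is unnecessary. The paper's proof is a few lines: by the chain characterization of the Bruhat order it suffices to treat $y=xs_\alpha$ with $\alpha$ a positive root of $SO(2n)$ and $\ell^D(y)>\ell^D(x)$; the latter is equivalent to $x(\alpha)$ being a positive root of $SO(2n)$, and since the positive roots of $SO(2n)$ (the $e_i\pm e_j$) form a subset of the positive roots of $Sp(2n)$, the element $x(\alpha)$ is also positive for $Sp(2n)$, whence $\ell^C(xs_\alpha)>\ell^C(x)$ and $x\le^C y$. This handles both connected components uniformly and avoids the $[i,j]$ statistics, the lifting-property case analysis, and the unproved reinforced inequalities entirely. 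If you want to salvage your approach, the missing step is to actually carry out the verification of the reinforced inequalities from the type-$D$ criterion; but the root-theoretic argument is both shorter and more robust.
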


\begin{proof}
It suffices to consider the case where $y=x s_\a$ for some positive root $\a$ of $SO(2n)$. In this case, $x \le^D y$ is equivalent to say that $x(\a)$ is a positive root of $SO(2n)$. Under the identification of $W(O(2n))$ with $W(Sp(2n))$ as abstract groups, we may identify the set of positive roots of $SO(2n)$ as a subset of the set of positive roots of $Sp(2n))$. In particular, $x(\a)$ is also a positive root of $Sp(2n)$. Thus $x \le^B y$. 
\end{proof}

\smallskip

The remainder of this section is concerned with the proof of Proposition \ref{p:order_reversing}.

Following \cite{He07}\footnote{Note that the convention for the labeling on the simple reflections here is opposite to the labeling used in \cite{He07} and the formulas below are modified accordingly.}, for $1 \le a, b \le n$, we define $$s_{[a, b]}=\begin{cases} s_a s_{a+1} \cdots s_b, & \text{ if } a \le b; \\ 1, & \text{ otherwise}.\end{cases}$$

\subsection{Case 1: Type $B_n/C_n/D_n$}

Let $\alpha=(\alpha_1, \alpha_2, \ldots, \alpha_\ell)$ be a partition of $n$. The corresponding class $\CC_\alpha\in\Wec$ consists of the permutations
$w$ satisfying:
\begin{enumerate}
	\item There exists a decomposition $\{1, 2, \ldots, n\}=I_1 \sqcup I_2 \sqcup \ldots \sqcup I_l$;
	
	\item $|I_j|=a_j$ for all $j$;
	
	\item The orbits of $w$ on $\{\pm 1, \ldots, \pm n\}$ are $I_j \sqcup -I_j$ for $1 \le j \le l$.
\end{enumerate}

We have the following useful inequalities for the elements in $\CC_{\alpha}$.

\begin{proposition}\label{B-ineq}
  Suppose $w \in \CC_{\alpha}$ and $0 \le m \le n$. Then
$$
w[n-m, n-m+1] \ge \min\{k; a_1+\cdots+a_k \ge m\}.
$$
\end{proposition}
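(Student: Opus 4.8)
The plan is to translate $w[n-m,n-m+1]$ into a statement about how the orbits of $w$ meet the ``top block'' $B:=\{n-m+1,\dots,n\}\subseteq\{\pm1,\dots,\pm n\}$, which has $|B|=m$. Since $w$ is a bijection of $\{\pm1,\dots,\pm n\}$ with $|w^{-1}(B)|=|B|=m$, and since the condition $w(k)\ge n-m+1$ is the same as $w(k)\in B$, we get
\[
w[n-m,n-m+1]=\bigl|\{k:\ -n\le k\le n-m,\ w(k)\in B\}\bigr|=m-\bigl|\{b\in B:\ w(b)\in B\}\bigr|.
\]
So it is enough to show $\bigl|\{b\in B:\ w(b)\in B\}\bigr|\le m-k_0$, where $k_0:=\min\{k:\ a_1+\cdots+a_k\ge m\}$.

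Next I would argue orbit by orbit. For $w\in\CC_\alpha$ the orbits on $\{\pm1,\dots,\pm n\}$ are the sets $O_j=I_j\sqcup(-I_j)$; each $O_j$ is a single $w$-cycle of length $2a_j$, and since $O_j$ is stable under $x\mapsto-x$ one has $w^{a_j}(x)=-x$ on $O_j$. Because $B$ consists of positive integers, $B\cap O_j\subseteq I_j$, so $B\cap O_j$ is a \emph{proper} subset of the cyclic set $O_j$ (it omits all of $-I_j$). Since $w(b)\in O_j$ automatically for $b\in O_j$, we have $\{b\in B:\ w(b)\in B\}\cap O_j=\{b\in B\cap O_j:\ w(b)\in B\cap O_j\}$, and for a proper subset $P$ of a cycle the number of $b\in P$ whose $w$-image again lies in $P$ equals $|P|-r$, where $r\ge0$ is the number of maximal cyclic arcs of $P$ (so $r=0$ iff $P=\emptyset$). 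Summing over $j$,
\[
\bigl|\{b\in B:\ w(b)\in B\}\bigr|=\sum_j\bigl(|B\cap O_j|-r_j\bigr)=m-\sum_j r_j,
\]
hence in fact $w[n-m,n-m+1]=\sum_j r_j\ \ge\ \#\{j:\ O_j\cap B\ne\emptyset\}$.

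Finally I would check that at least $k_0$ orbits meet $B$: all $m$ elements of $B$ lie in $\bigcup_{O_j\cap B\ne\emptyset}I_j$, and $|B\cap O_j|\le|I_j|=a_j$, so if exactly $t$ orbits met $B$ then $m\le\sum_{O_j\cap B\ne\emptyset}a_j\le a_1+\cdots+a_t$, using that $\alpha=(a_1\ge a_2\ge\cdots)$ is a partition; by the definition of $k_0$ this forces $t\ge k_0$, and therefore $w[n-m,n-m+1]\ge k_0$. The argument is uniform in types $B_n,C_n,D_n$ since it uses only the orbit description (1)--(3) of $\CC_\alpha$. The one subtle point is the cycle-counting identity in the middle step: one must \emph{not} assume that the positive elements of a negative cycle occupy a single contiguous arc (they need not), only that $B\cap O_j$ is a proper subset of the cycle — with that, the rest is bookkeeping, and the degenerate case $m=0$ (where $B=\emptyset$, $k_0=0$) is trivial.
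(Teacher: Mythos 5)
Your proof is correct and follows essentially the same route as the paper's: both arguments reduce the inequality to showing that $w[n-m,n-m+1]$ is at least the number of orbits $I_j\sqcup(-I_j)$ meeting the top block $B=\{n-m+1,\dots,n\}$, and then apply the identical counting argument using the decreasingly ordered parts $a_1\ge a_2\ge\cdots$. The only (cosmetic) difference lies in how that first bound is obtained: the paper exhibits a crossing point on the trajectory from $-\max I_j$ to $\max I_j$ inside each relevant orbit, whereas you count maximal cyclic arcs of $B\cap O_j$, which in fact yields the exact value $w[n-m,n-m+1]=\sum_j r_j$.
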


\begin{proof}
Let $I_1, \ldots, I_l$ be the subsets of $\{1, \ldots, n\}$
satisfying the conditions (1)-(3) for $w$. Let $1 \le j \le l$ and
$r=\max\{i\mid i\in I_j\}$.  Then $w^{|I_j|}(-r)=r$. Therefore
if $r \ge n-m+1$, then there exists $s \in \BN$ such that
$w^s(-r) \le n-m$ and $w^{s+1}(-r) \ge n-m+1$. In other
words,
$$
w\{n-m, n-m+1\} \cap (I_j \sqcup -I_j) \neq \emptyset.
$$
Therefore
$$
w[n-m, n-m+1] \ge |\{j; \max I_j \ge n-m+1\}|.
$$

Let $J=\{j; \max I_j \ge n-m+1\}$. Note that there are exactly $m$ elements in $I_1 \sqcup \ldots \sqcup I_l$ that are larger than or equal to $n-m+1$. We have $\sum_{j \in J} | I_j| \ge m$. As $|I_k|=a_k$ for all $k$ and $a_1 \ge \cdots \ge a_l$, we have $a_1+\cdots+a_{| J|} \ge \sum_{j \in J} | I_j| \ge m$. In other words, $| J| \ge \min\{k; a_1+\cdots+a_k \ge m\}$. The proposition is proved.
\end{proof}

\begin{proof}[Proof of Proposition \ref{p:order_reversing} for type B/C]

For any partition $\alpha=(a_1, \ldots, a_l)$ of $n$, we define $$w_{\alpha}=(s_{[2, n+1-a_1]} \i s_{[1, n]}) (s_{[2, n+1-a_1-a_2]} \i s_{[1, n-a_1]}) \cdots (s_{[1, a_l]}).$$ By \cite[Lemma 7.15]{He07}, $w_{\alpha}$ is a minimal length element in $\CC_{\alpha}$.

If $\alpha \ge \beta$, then
  $$
  \alpha_1 \ge \beta_1, \alpha_1+\alpha_2 \ge \beta_1+\beta_2, \ldots.
  $$
Thus \begin{gather*} s_{[2, n+1-\alpha_1]} \i s_{[1, n]} \le s_{[2, n+1-\beta_1]} \i s_{[1, n]}, \\ s_{[2, n+1-\alpha_1-\alpha_2]} \i s_{[1, n-\alpha_1]} \le s_{[2, n+1-\beta_1-\beta_2]} \i s_{[1, n-\beta_1]}, \\ \ldots. \end{gather*} So $w_{\alpha} \le w_{\beta}$ and thus $\CC_{\alpha} \leW\CC_{\beta}$.
	
On the other hand, if $\CC_{\alpha} \leW\CC_{\beta}$, then
there exists $w \in\CC_{\alpha}$ with $w \le w_{\beta}$. Let
$m=\beta_1+\cdots+\beta_k$. By direct computation, we have
$w_{\beta}[n-m, n-m+1]=k$. By Proposition
\ref{p:bruhat}, $w[n-m, n-m+1] \le k$. By Proposition
\ref{p:bruhat}(2), $k \ge \min\{k'; \alpha_1+\cdots+\alpha_{k'} \ge m\}$. In other
words, $\alpha_1+\cdots+\alpha_k \ge m=\beta_1+\cdots+\beta_k$. Therefore
$\alpha \ge \beta$.
\end{proof}

\begin{proof}[Proof of Proposition \ref{p:order_reversing} for type D]
%Let $W$ be the Weyl group of type $D_n$.

Let $\d$ be the permutation of $\{\pm1, \dots, \pm n\}$ such that $\d(1) = -1$ and $\d(k) = k$ for $2 \le k \le n$. Then $W=W^0 \sqcup W^0 \d$. For any partition $\a$, the corresponding conjugacy class $\CC_\a$ is contained in $W^0$ if $\a_1^*$ is even and is contained in $W^0 \d$ if $\a_1^*$ is odd. 

For $0 \le a<b \le n$, define 
\[
w_{a, b}=
\begin{cases}
s_{[3, n+1-b]} \i s_{[1, n-a]}, & \text{ if } b \le n-1;\\
1, & \text{ otherwise}. 
\end{cases}
\]

For any partition $\alpha=(a_1, \ldots, a_l)$ of $n$, we define $$w'_{\alpha}=\begin{cases} w_{[0, \a_1]} w_{[\a_1, \a_1+\a_2]} \cdots w_{\sum_{1 \le i \le l-1} \a_i, n}, & \text{ if } 2 \mid \a_1^*; \\ w_{[0, \a_1]} w_{[\a_1, \a_1+\a_2]} \cdots w_{\sum_{1 \le i \le l-1} \a_i, n} \d, & \text{ if } 2 \nmid \a_1^*.\end{cases}$$ By \cite[Lemma 7.19]{He07}, $w'_{\alpha}$ is a minimal length element in $\CC_{\alpha}$.

Similar to the proof of the Proposition in type $B/C$, we have $w'_\a \le w'_\b$ if $\a \ge \b$.

On the other hand, if $\CC_{\alpha} \leW \CC_{\beta}$, then there exists $w \in \CC_{\alpha}$ with $w \leq w'_{\beta}$. Let $m=\beta_1+\cdots+\beta_k$. By direct computation, we have $w'_{\beta} [n-m, n-m+1]=k$. By Proposition \ref{p:bruhat}(3) $w[n-m, n-m+1] \le k$. By Proposition \ref{B-ineq}, $k \ge \min\{k'; \alpha_1+\cdots+\alpha_{k'} \ge m\}$. In other words, $\alpha_1+\cdots+\alpha_k \ge m=\beta_1+\cdots+\beta_k$. Therefore $\alpha \ge \beta$.
\end{proof}

\subsection{Case 2: Type ${}^2 A_{n-1}$} Let $W=S_n$ be the group of permutations of $\{1, 2, \cdots, n\}$. For $1 \le i, j \le n$, we define $$w\{i, j\}=\{-n \le k \le n; k \le i, w(k) \ge j\},  \quad w[i,j]=|w\{i, j\}|.$$

\begin{comment}
  We have the following explicit description of the Bruhat order. \remind{reference}

\begin{proposition}\label{A-i-j}
Let $x, y \in W$. Then $x \le y$ if and only if $|x[i, j]| \le |y[i, j]|$ for any $1 \le i, j \le n$.
\end{proposition}
\end{comment}

Let $\d=(1, n) (2, n-1) \cdots$ be the longest element of $W$. Then the conjugation action of $\d$ on $W$ induces a bijection on the set of simple reflections and is a length-preserving automorphism. The map $W \to W: w \mapsto w \d$ induces a bijection from the set of $\d$-twisted conjugacy class of $W$ to the set of ordinary conjugacy classes of $W$. Since $\d$ is the longest element of $W$, the map is order-reversing.

The $\d$-twisted elliptic conjugacy classes of $W$ are parametrized by the partitions of $n$ with odd parts. Let $\alpha=(\alpha_1, \alpha_2, \ldots, \alpha_l)$ be a partition of $n$ with odd parts, i.e. $\alpha_1 \ge \alpha_2 \ge \cdots \ge \alpha_l>0$ are odd positive integers and $\alpha_1+\cdots+\alpha_l=n$. Let $\CC_{\alpha}$ be the corresponding $\d$-twisted elliptic conjugacy class in $W$. This is the conjugacy class of $W$ consisting of permutations $w$ satisfying the following conditions:

\begin{enumerate}
	\item There exists a decomposition $\{1, 2, \ldots, n\}=I_1 \sqcup I_2 \sqcup \ldots \sqcup I_l$;
	
	\item $|I_j|=\alpha_j$ for all $j$;
	
	\item The orbits of $w \d$ on $\{1, 2, \ldots, n\}$ are $I_j$ for $1 \le j \le l$.
\end{enumerate}

We have the following useful inequalities for the elements in $\CC_{\alpha}$.

\begin{proposition}\label{A-ineq}
	Let $w \in\CC_{\alpha}$ and $1 \le m \le n-1$, we have $$w\d[\lceil \frac{m}{2}\rceil, n-\lfloor \frac{m}{2} \rfloor+1]+w\d[n-\lfloor \frac{m}{2} \rfloor, \lceil \frac{m}{2}\rceil+1] \le n-\min\{k; \alpha_1+\cdots+\alpha_k \ge m\}.$$
\end{proposition}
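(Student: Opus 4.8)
The plan is to translate the asserted inequality into a statement about the single permutation $u:=w\delta$, and then to bound a ``crossing number'' of $u$ cycle by cycle, the essential input being that every cycle of $u$ has odd length. First I would fix notation: set $a=\lceil m/2\rceil$ and $b=\lfloor m/2\rfloor$, so that $a+b=m$ and $a-b\in\{0,1\}$, and put $A=\{1,\dots,a\}$ and $B=\{n-b+1,\dots,n\}$. Since $m\le n-1$ these are disjoint subsets of $\{1,\dots,n\}$ with $|A|=a$, $|B|=b$ and $|A\cup B|=m$. From the definition of $w[i,j]$ (which for $w\in S_n$ simply counts the $1\le k\le i$ with $w(k)\ge j$) one reads off $u[a,\,n-b+1]=|\{k\in A:\ u(k)\in B\}|$ and $u[n-b,\,a+1]=|\{k\notin B:\ u(k)\notin A\}|$; applying inclusion--exclusion to the second quantity and using $|u^{-1}(A)|=|A|=a$ rewrites it as $n-m+|\{k\in B:\ u(k)\in A\}|$. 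Hence, writing $N:=\min\{k:\ \alpha_1+\dots+\alpha_k\ge m\}$, the proposition becomes equivalent to
$$
\mathcal{T}:=|\{k\in A:\ u(k)\in B\}|+|\{k\in B:\ u(k)\in A\}|\ \le\ m-N.
$$

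Next I would estimate $\mathcal{T}$ one cycle at a time. By the description of $\CC_\alpha$ the cycles of $u=w\delta$ are exactly the sets $I_1,\dots,I_l$ with $|I_j|=\alpha_j$, and every $\alpha_j$ is odd. For a fixed $j$ set $E_j=I_j\cap(A\cup B)$, $e_j=|E_j|$, and let $\mathcal{T}_j$ denote the part of $\mathcal{T}$ contributed by $k\in I_j$. The key lemma I want is that $\mathcal{T}_j=0$ if $e_j=0$, while $\mathcal{T}_j\le e_j-1$ if $e_j\ge1$. To prove this I would label each element of $E_j$ by $A$ or $B$ according to which of the two sets contains it, read the labels in cyclic order around the cycle $I_j$ to obtain a cyclic binary word of length $e_j$, and note that a contribution to $\mathcal{T}_j$ can occur only between two elements of $E_j$ that are adjacent in the full cycle and carry different labels; so $\mathcal{T}_j$ is at most the number $D$ of sign changes of that cyclic word. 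Since $D$ is always even and $0\le D\le e_j$, oddness of $e_j$ gives $D\le e_j-1$ immediately. When $e_j$ is even I would use oddness of $\alpha_j=|I_j|$: either $D\le e_j-2$, or $D=e_j$ and the word is perfectly alternating, in which case $e_j<\alpha_j$ forces $I_j$ to contain an element outside $A\cup B$, and any such element lies between two consecutive, oppositely labelled elements of $E_j$, destroying one of the $D$ potential contributions; in all cases $\mathcal{T}_j\le e_j-1$.

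Summing the lemma over $j$ then yields $\mathcal{T}\le\sum_{j:\,e_j\ge1}(e_j-1)=m-r$, where $r$ is the number of cycles of $u$ meeting $A\cup B$. Finally, since $\sum_j e_j=|A\cup B|=m$ with $e_j\le\alpha_j$ for all $j$, the $r$ largest parts of $\alpha$ already sum to at least $m$, i.e. $\alpha_1+\dots+\alpha_r\ge m$, and hence $r\ge N$. Combining the two inequalities gives $\mathcal{T}\le m-r\le m-N$, which is exactly what was asserted. This is the type ${}^2A$ analogue of Proposition \ref{B-ineq}, and like that statement it is designed to be combined with Proposition \ref{p:bruhat}(1) and the explicit minimal length representatives constructed in \cite{He07} in the proof of Proposition \ref{p:order_reversing} for type ${}^2A$.

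I expect the cycle-level lemma to be the main obstacle, and inside it the subcase where $e_j$ is even: there one genuinely needs oddness of $\alpha_j$ (to guarantee an element of $I_j$ outside $A\cup B$) together with the bookkeeping that such an element interrupts the alternation of labels. The remaining steps --- rewriting $w\delta[\,\cdot\,,\cdot\,]$ by inclusion--exclusion and bounding $r$ from below --- are routine.
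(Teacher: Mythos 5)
Your argument is correct, and its skeleton matches the paper's proof: both decompose over the cycles of $u=w\delta$ and finish with the identical counting step (the cycles meeting the outer set have parts summing to at least $m$, hence there are at least $\min\{k:\alpha_1+\cdots+\alpha_k\ge m\}$ of them). The difference lies in how the per-cycle bound is obtained. The paper works directly with the two sets $I=w\delta\{\lceil m/2\rceil,\,n-\lfloor m/2\rfloor+1\}$ and $I'=w\delta\{n-\lfloor m/2\rfloor,\,\lceil m/2\rceil+1\}$, observes $I\cap(w\delta)(I')=I\cap(w\delta)^{-1}(I')=\emptyset$, and shows that equality in $|I_j\cap I|+|I_j\cap I'|\le|I_j|$ forces $I_j\cap I'$ to be $(w\delta)^2$-stable, hence $w\delta$-stable because $|I_j|=\alpha_j$ is odd, hence $I_j$ lies in the middle interval $\{\lceil m/2\rceil+1,\dots,n-\lfloor m/2\rfloor\}$; so every cycle meeting $A\cup B$ loses at least one. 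Your inclusion--exclusion step rewrites the second term as $n-m+|\{k\in B: u(k)\in A\}|$, and since $|I_j\cap I|+|I_j\cap I'|=|I_j|-e_j+\mathcal{T}_j$, your lemma $\mathcal{T}_j\le e_j-1$ is exactly equivalent to the paper's refined per-cycle bound; but you prove it by a different mechanism, namely the cyclic-word parity argument (sign changes around a cycle are even; when $e_j$ is even and the word alternates, $e_j<\alpha_j$ supplies a gap element that destroys one potential crossing). Both routes use oddness of the parts in an essential way; the paper's stability argument is slicker, while yours is more elementary and makes explicit both the symmetry between $A$ and $B$ and the precise points where oddness enters.
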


\begin{proof} Let $I=w\d\{\lceil \frac{m}{2}\rceil, n-\lfloor \frac{m}{2} \rfloor+1\}$ and $I'=w\d \{n-\lfloor \frac{m}{2} \rfloor, \lceil \frac{m}{2}\rceil+1\}$. If $k \in I$, then $k \le \lceil \frac{m}{2}\rceil<\lceil \frac{m}{2}\rceil+1$ and thus $(w \d)\i (k) \notin I'$. In other words, $$I \cap (w \d) (I')=\emptyset.$$ Similarly,  $$I \cap (w \d) \i (I')=\emptyset.$$
	
In particular, for any $w\d$-orbit $I_j$, we have \[\tag{a}| (I_j \cap I)|+| (I_j \cap I')|=| (I_j \cap I)|+| (I_j \cap (w \d)(I'))| \le|I_j|\]

We claim that

(b) If $|(I_j \cap I)|+|(I_j \cap I')|= |I_j|$, then $I_j \subset \{\lceil \frac{m}{2}\rceil+1, \lceil \frac{m}{2}\rceil+2, \ldots, n-\lfloor \frac{m}{2} \rfloor\}$.

Note that
$$
|I_j \cap I|+|I_j \cap w \d(I')|)=|I_j \cap I|+|I_j \cap (w \d) \i (I')|=|I_j|.
$$
Since
$I \cap (w \d)(I')=I \cap (w \d) \i(I')=\emptyset$, we have
$$(I_j \cap I) \sqcup (I_j \cap (w \d)(I'))=(I_j \cap I) \sqcup (I_j
\cap (w \d) \i(I'))=I_j.$$ Thus
$$(w \d) (I_j \cap I')=I_j \cap (w \d)(I')=I_j \cap (w \d) \i(I')=(w
\d) \i(I_j \cap I').$$ In other words, $I_j \cap I'$ is a subset of
$I_j$ that is stable under the action of $(w \d)^2$. Since the order
of the action of $w \d$ on $I_j$ equals to $|I_j|$, which is an
odd integer. Hence $I_j \cap I'$ is a $w \d$-stable subset of
$I_j$. As $w \d$ acts transitively on $I_j$, we have
$I_j \cap I'=\emptyset$ or $I_j$. Hence $I_j \subset I$ or
$I_j \subset I'$. However, as
$\lceil \frac{m}{2} \rceil<n-\lfloor \frac{m}{2} \rfloor+1$, if
$k \in I$, then $(w \d)(k) \notin I$. Thus $I_j \subset I'$. In other
words, for any $k \in I_j$, $k \ge \lceil \frac{m}{2} \rceil+1$ and
$k \le n-\lfloor \frac{m}{2} \rfloor$.

(b) is proved.

Let $J=\{j; I_j \not\subset \{\lceil \frac{m}{2}\rceil+1, \lceil \frac{m}{2}\rceil+2, \ldots, n-\lfloor \frac{m}{2} \rfloor\}\}$. By (a) and (b), we have that $$| J|+| J'|=\sum_j (|I_j \cap J|+|I_j \cap J'|) \le \sum_j |I_j|-|J|=n-|J|.$$ Note that there are exactly $m$ elements outside $\{\lceil \frac{m}{2}\rceil+1, \lceil \frac{m}{2}\rceil+2, \ldots, n-\lfloor \frac{m}{2} \rfloor\}$. We have $\sum_{j \in J} | I_j| \ge m$. As $| I_k|=\alpha_k$ for all $k$ and $\alpha_1 \ge \cdots \ge \alpha_l$, we have $\alpha_1+\cdots+\alpha_{|J|} \ge \sum_{j \in J} | I_j| \ge m$. In other words, $|J| \ge \min\{k; \alpha_1+\cdots+\alpha_k \ge m\}$. The proposition is proved.
\end{proof}

Now we prove the following result.

\begin{proof}[Proof of Proposition \ref{p:order_reversing} for type ${}^2 A_{n-1}$]

  \begin{comment}
  \begin{proposition}
	Let $\alpha, \beta$ be partitions of $n$ with odd parts. Then $\CC_{\alpha} \leW\CC_{\beta}$ if and only if $\alpha \ge \beta$.
\end{proposition}
  \end{comment}

%\begin{proof}
	The strategy is similar to the proof of the Proposition  in types B/C. 
	
	For any partition $\alpha=(a_1, \ldots, a_l)$ of $n$ consisting of only odd parts, we define $$w''_{\alpha}=s_{[1, n+1-\frac{a_1+1}{2}]} \i s_{[\frac{a_1+1}{2}+1, n+2-\frac{a_1+1}{2}-\frac{a_2+1}{2}]} \i \cdots s_{[\sum_{1 \le i \le l-1} \frac{a_i+1}{2}+1, n+l-\sum_{1 \le i \le l} \frac{a_i+1}{2}]} \d.$$ By \cite[Lemma 7.13]{He07}, $w''_{\alpha}$ is a minimal length element in $\CC_{\alpha}$.
	
	If $\alpha \ge \beta$, then $\alpha_1 \ge \beta_1$, $\alpha_1+\alpha_2 \ge \beta_1+\beta_2$, $\ldots$. By the explicit formula, $w''_{\alpha} \le w''_{\beta}$ and thus $\CC_{\alpha} \leW \CC_{\beta}$.
	
	On the other hand, if $\CC_{\alpha} \leW \CC_{\beta}$, then
        there exists $w \in \CC_{\alpha}$ with $w \le
        w''_{\beta}$. Hence $w w_0 \ge w''_{\beta} w_0$, where $w_0=(1 n) (2, n-1) \cdots$. Let
        $m=\beta_1+\cdots+\beta_k$. By direct computation, we
        have
        $$w_{\beta} w_0[\lceil \frac{m}{2}\rceil, n-\lfloor
        \frac{m}{2} \rfloor+1]+w_{\beta} w_0[n-\lfloor \frac{m}{2}
        \rfloor, \lceil \frac{m}{2}\rceil+1]=n-k.$$
	
	By Proposition \ref{p:bruhat}(1), $$w w_0[\lceil \frac{m}{2}\rceil, n-\lfloor \frac{m}{2} \rfloor+1]+w w_0[n-\lfloor \frac{m}{2} \rfloor, \lceil \frac{m}{2}\rceil+1]\ge n-k.$$ By Proposition \ref{A-ineq}, $k \ge \min\{k'; \alpha_1+\cdots+\alpha_{k'} \ge m\}$. In other words, $\alpha_1+\cdots+\alpha_k \ge m=\beta_1+\cdots+\beta_k$. Therefore $\alpha \ge \beta$.
\end{proof}

\section{Exceptional Groups}\label{6-exc}

We prove Theorem \ref{main} for exceptional groups by computer
computation, using the Atlas of Lie Groups and Representations
software \cite{atlas}.

Suppose $G$ is a simple, untwisted exceptional group.  We first list
representatives of the elliptic conjugacy classes in the Weyl group,
and we choose these representatives to be of minimal length.  We use
the representatives of \cite{geck_pfeiffer}*{Appendix B}, and we name
the classes according to these tables.  We explicitly compute the
order relation on these classes.

We label the unipotent classes in characteristic $0$ as in \cite{Spa}*{Section IV.2}.
Using the explicit description
of Lusztig's map $\Phi$ \cite{L1}*{Section 4.3}, and the closure relations
for unipotent orbits of \cite{Spa}*{Section IV.2}, we compute the Hasse diagram for the
image of $\Phi$.

In the twisted cases ${}^3\negthinspace D_4$ and
${}^2\negthinspace E_6$ we label the elliptic conjugacy classes as in
\cite{L3}.  This uses the label of the ordinary conjugacy class, from
the CHEVIE software in type ${}^3D_4$, and from \cite{carter} in
${}^2\negthinspace E_6$, and in both cases we also use ${}^!$ to
indicate the twisted class is elliptic.
The labelling of unipotent classes
classes is from \cite{Spa}, page 148 (${}^3D_4$) and page 250 (${}^2E_6$),
followed by (in brackets) 
that of \cite{M1} and \cite{M2}.
Lusztig defines the map $\Phi$  using  the latter notation.

%We give the table relating these labelings of unipotent classes after the diagrams.

In each case we give the bijection between elliptic conjugacy classes
and unipotent orbits and the corresponding Hasse diagram.

We explain the notation using the example of $G_2$.
\bigskip
\bigskip

\centerline{\includegraphics[scale=.1]{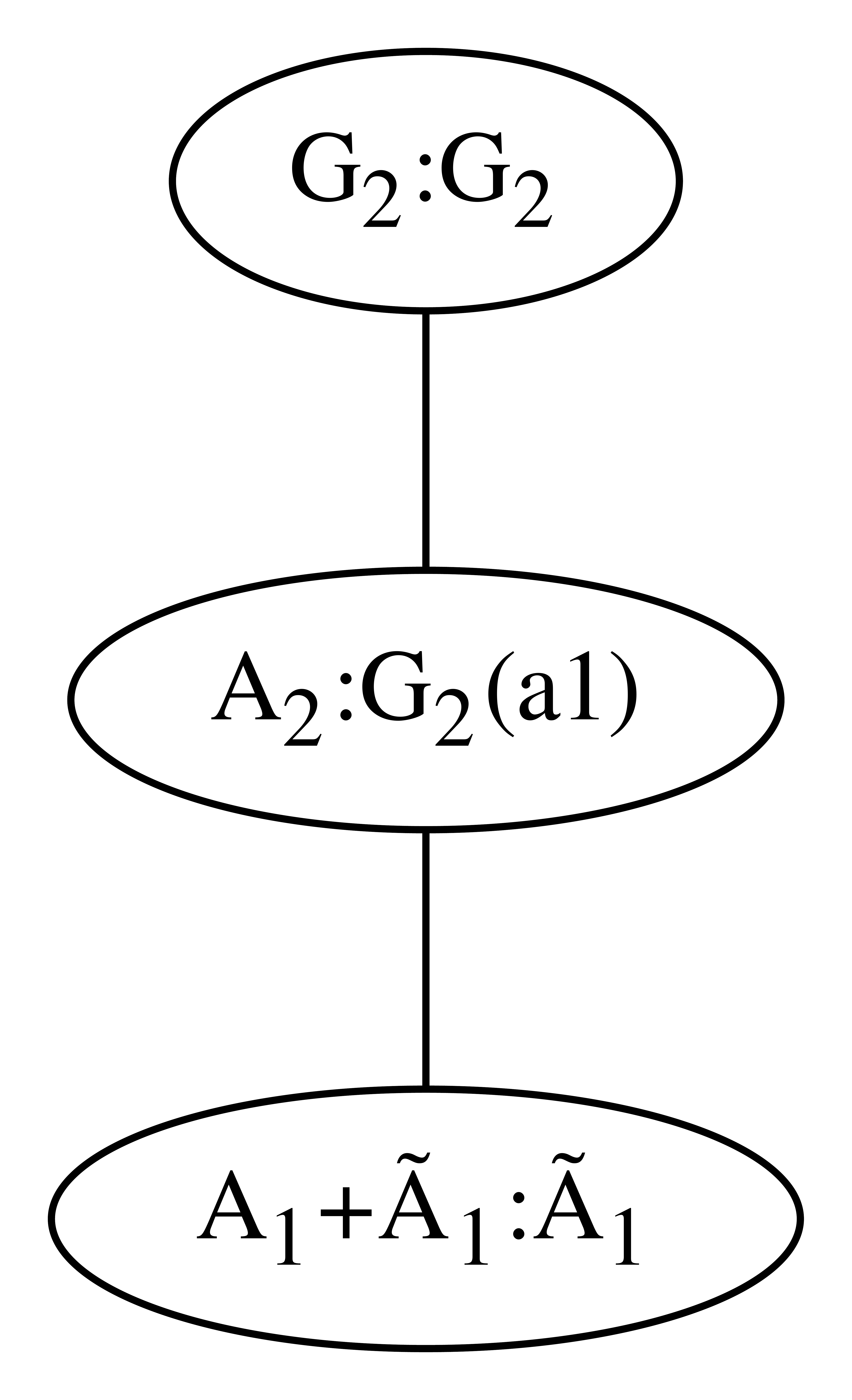}}

\bigskip
\bigskip

The left-hand entries $G_2,A_2$ and $A_1+\wt{A_1}$ are the elliptic
conjugacy classes of the Weyl group, of orders $6$ (the Coxeter element of $G_2$),
$3$ (the Coxeter element of $A_2$) and $2$ (the Coxeter element of $A_1+\wt{A_1}$), respectively.
The order relation is:
$$
G_2\leW A_2\leW A_1+\wt{A_1}.
$$
The entry following the colon
is a unipotent class in $G_2(\C)$, labeled as in \cite{Spa}; the image of
the corresponding elliptic conjugacy class under the Lusztig map.
These are $G_2,G_2(a_1)$ and $\wt{A_1}$, of dimension $12$ (the regular orbit), $10$ (the subregular orbit)
and $6$ respectively, of which the first two are distinguished. Here the order is the opposite one.
$$
\wt{A_1}\leu G_2(a_1)\leu G_2.
$$

\begin{figure}[h]
    \centering
    \begin{minipage}{0.50\textwidth}
        \centering
        \includegraphics[scale=.1]{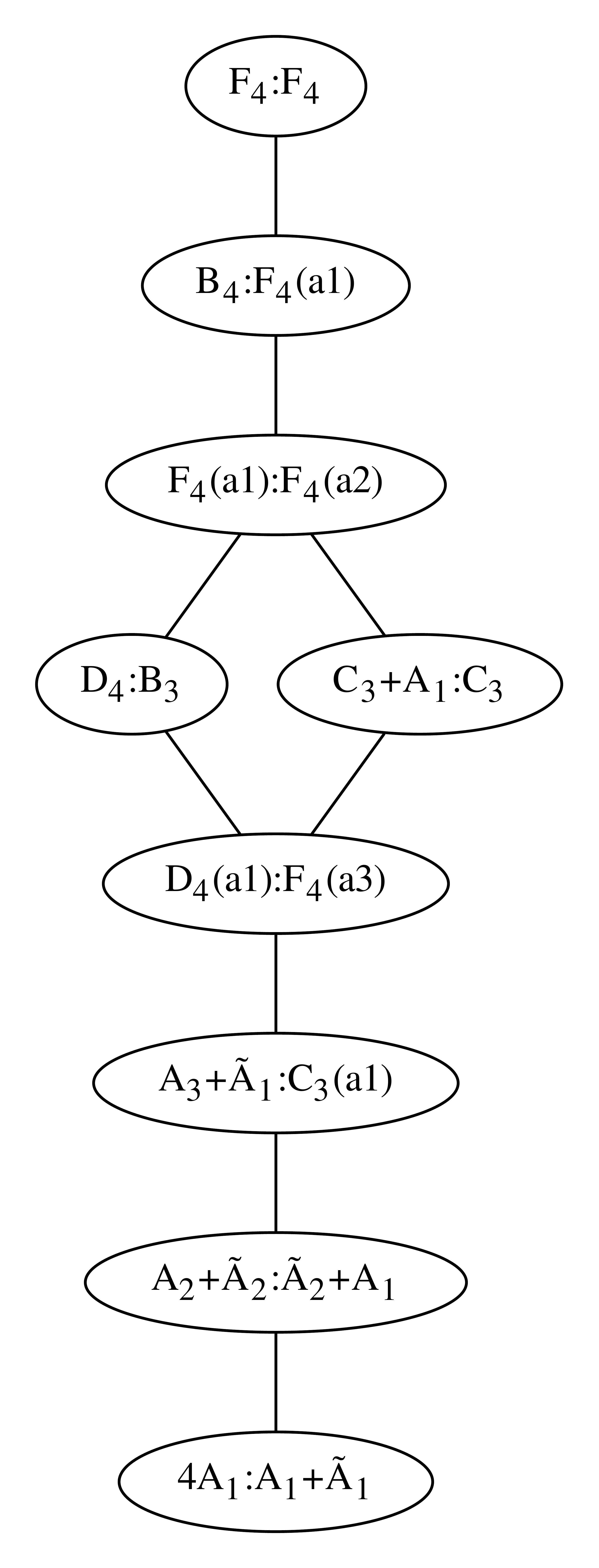} % first figure itself
        \captionsetup{labelformat=empty}
        \caption{Type $F_4$}
    \end{minipage}\hfill
    \begin{minipage}{0.45\textwidth}
        \centering
        \includegraphics[scale=.1]{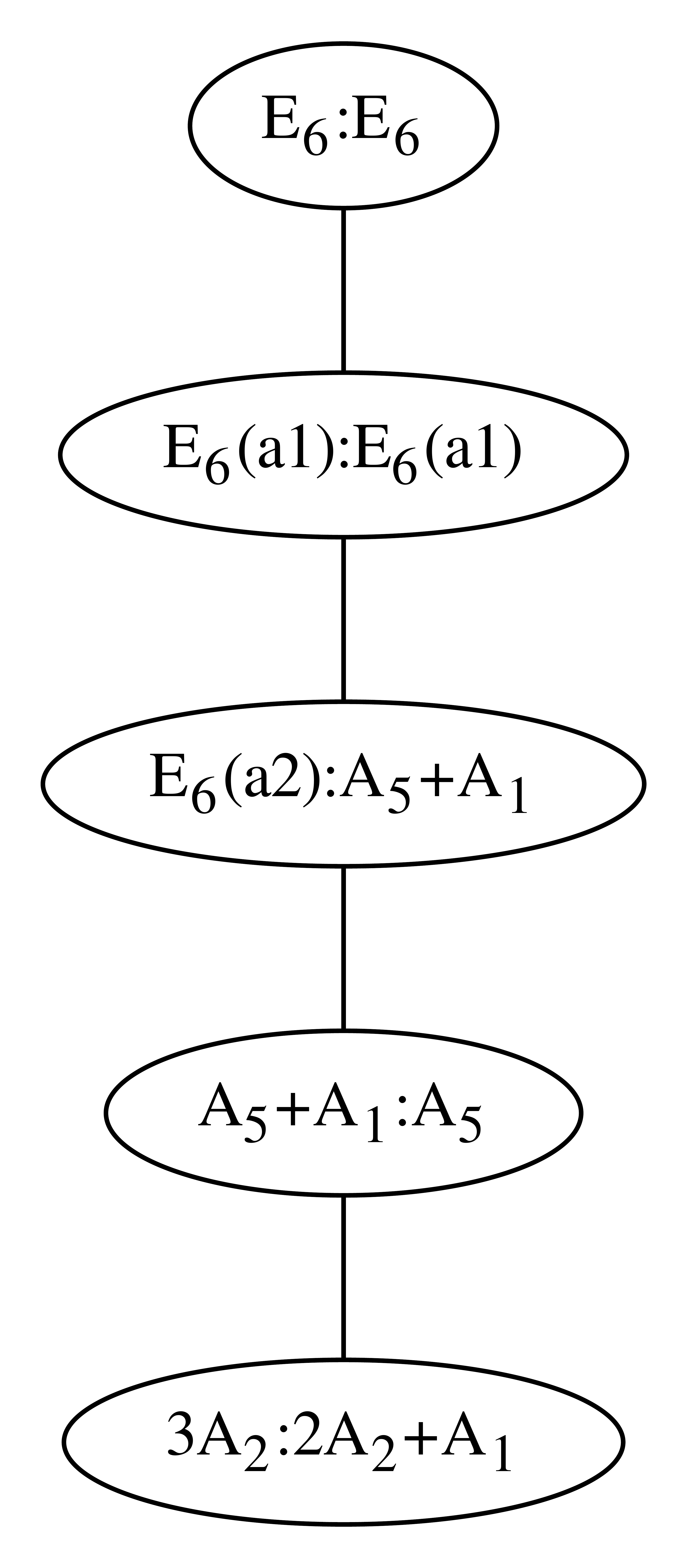} % first figure itself
                \captionsetup{labelformat=empty}
        \caption{Type $E_6$}
    \end{minipage}
  \end{figure}

\newpage

\begin{figure}
    \centering
    \begin{minipage}{0.45\textwidth}
        \centering
        \includegraphics[scale=.24]{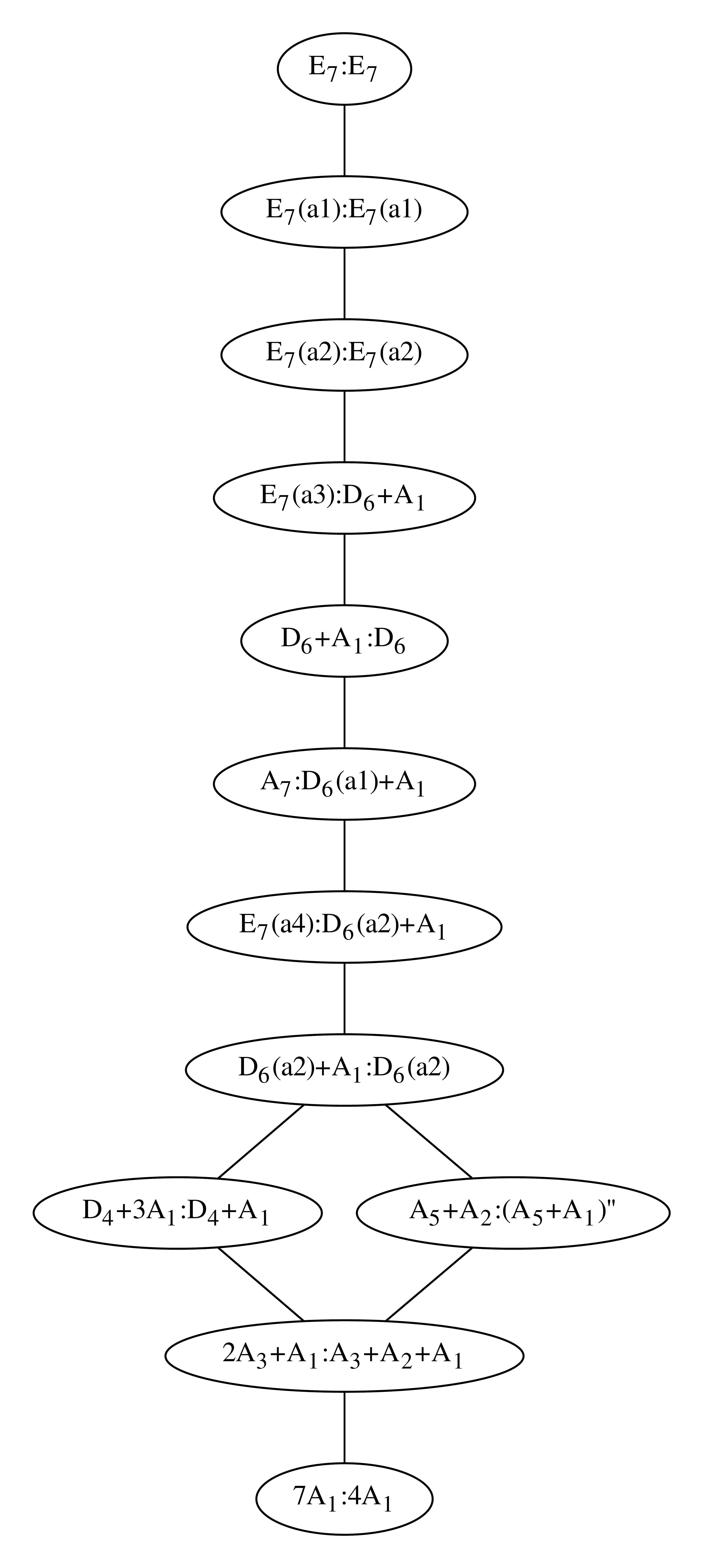} % first figure itself
        \captionsetup{labelformat=empty}
        \caption{\hskip .7in Type $E_7$}  %why are the captions not always centered?
    \end{minipage}\hfill
    \begin{minipage}{0.45\textwidth}
        \centering
        \includegraphics[scale=.28]{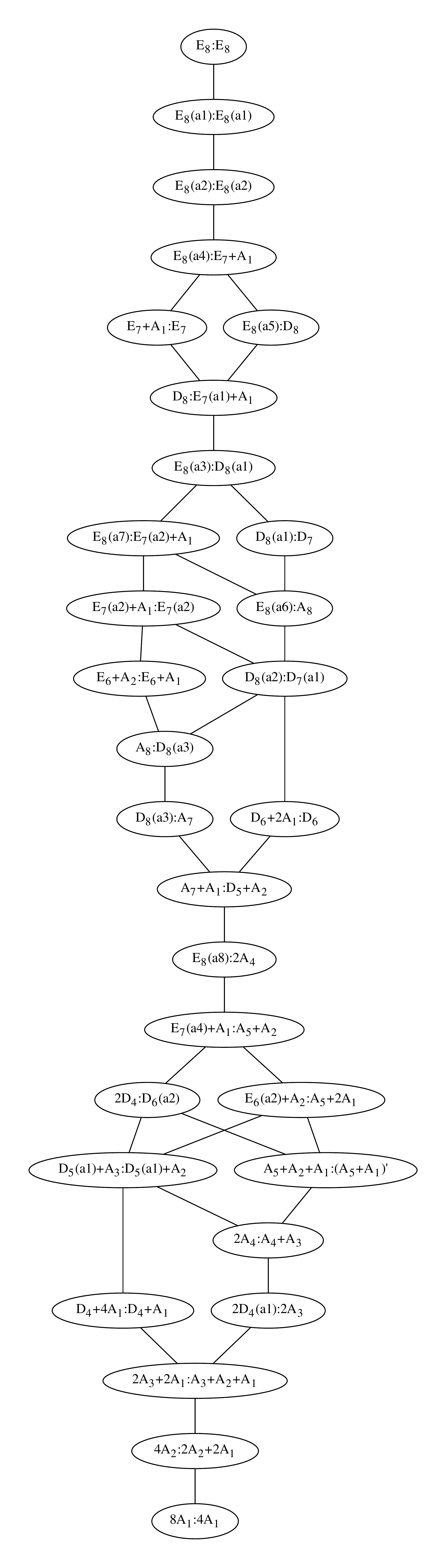} % first figure itself
                \captionsetup{labelformat=empty}
        \caption{\hskip -.3in Type $E_8$}
    \end{minipage}
  \end{figure}

\newpage

\begin{figure}[t]
    \centering
    \begin{minipage}{0.45\textwidth}
        \centering
        \includegraphics[scale=.1]{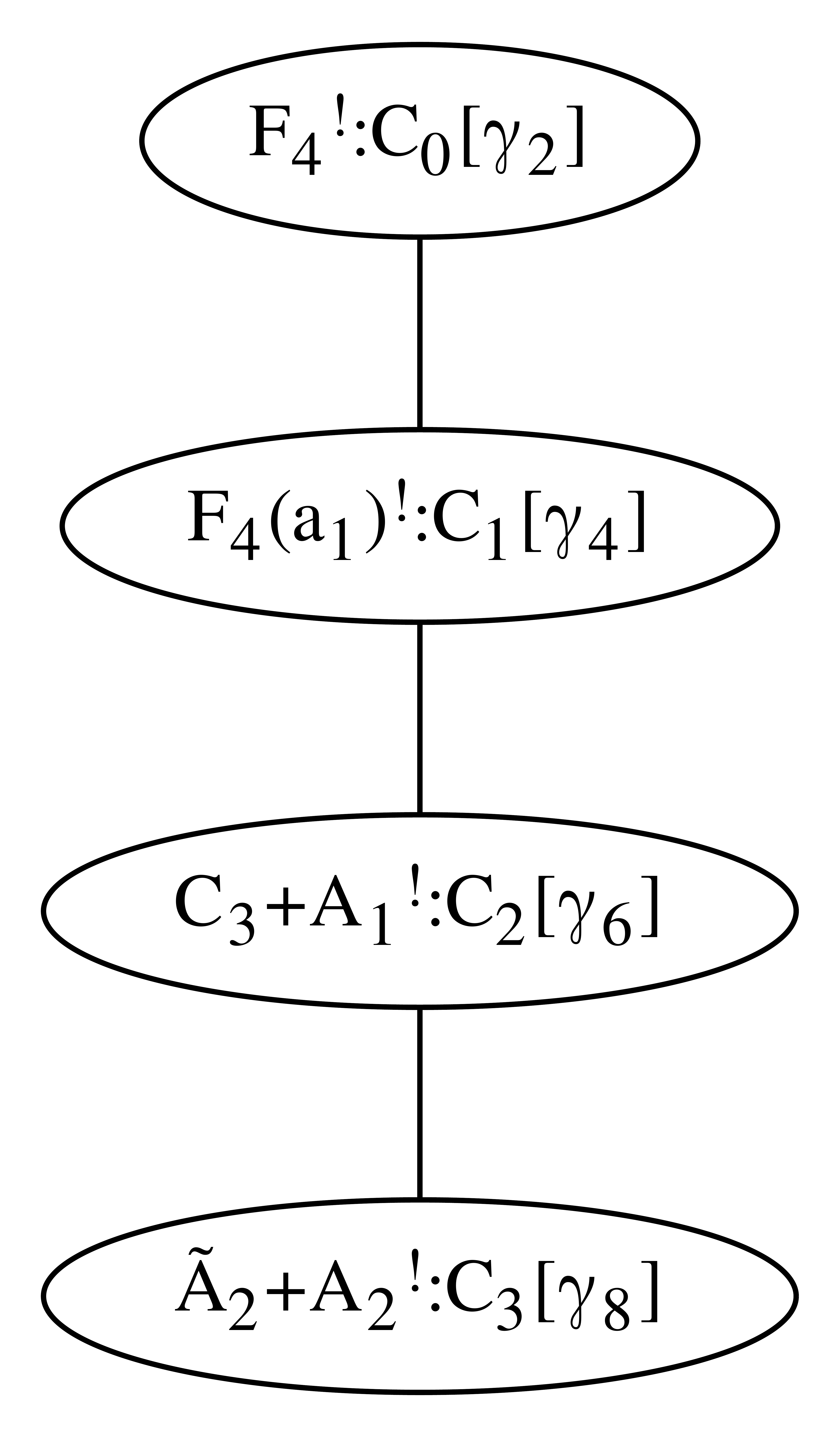} % first figure itself
        \captionsetup{labelformat=empty}
        \caption{Type $^3 D_4$}
    \end{minipage}\hfill
    \begin{minipage}{0.50\textwidth}
        \centering
        \includegraphics[scale=.19]{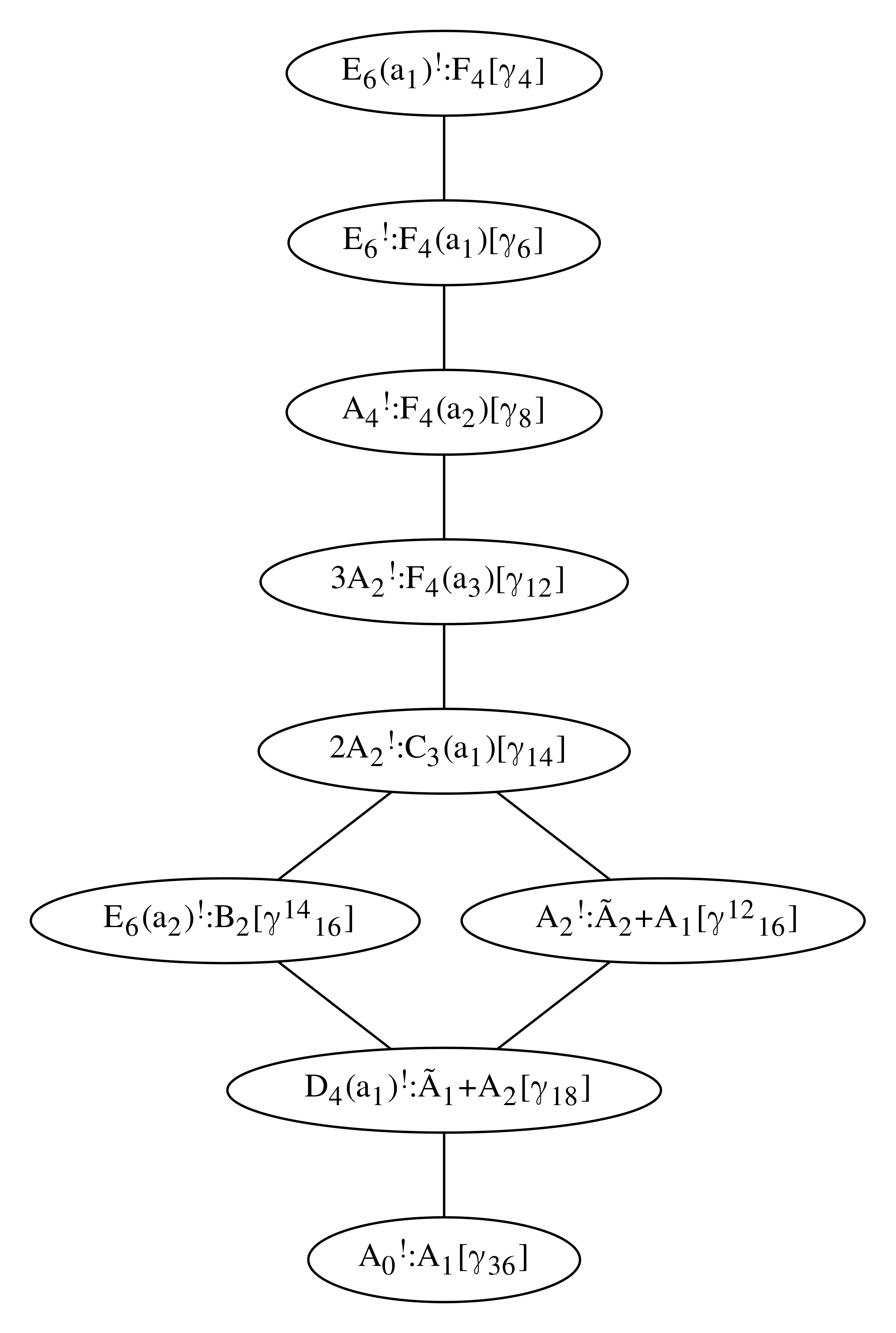}
                \captionsetup{labelformat=empty}
        \caption{\hskip 1in Type $^2E_6$}  
    \end{minipage}
  \end{figure}

\begin{comment}
  \begin{figure}[h]
    \centering
    \begin{minipage}{0.45\textwidth}
      \centering
                      \captionsetup{labelformat=empty}
                      \begin{tabular}{|c|c|}
                        \hline
    $C_0$ & $\gamma_{2}$ \\\hline
    $C_1$ & $\gamma_4$ \\\hline
    $C_2$ & $\gamma_6$ \\\hline
        $C_3$ & $\gamma_8$ \\\hline
    \end{tabular}
    \end{minipage}\hfill
    \begin{minipage}{0.45\textwidth}
      \hskip-1.1in

              \centering
                      \begin{tabular}{|c|c|}
                        \hline
    $F_4$ & $\gamma_{4}$ \\\hline
    $F_4(a_1)$ & $\gamma_6$ \\\hline
    $F_4(a_2)$ & $\gamma_8$ \\\hline
    $F_4(a_3)$ & $\gamma_{12}$ \\\hline
    $C_3(a_1)$ & $\gamma_{14}$ \\\hline
    $B_2$ & $\gamma_{16}^{14}$ \\\hline
    $\tilde A_2+A_1$ & $\gamma_{16}^{12}$ \\\hline
   $\tilde A_1+A_2$ & $\gamma_{18}$ \\\hline
   $A_1$ & $\gamma_{36}$ \\\hline
                      \end{tabular}

                \captionsetup{labelformat=empty}
              \end{minipage}

  \end{figure}

\end{comment}

%\includepdf[page={2,3,4}]{exceptionalDiagrams.pdf}

\clearpage

\end{document}